\pgfplotsset{compat=1.14}
\newtheorem{theorem}{Theorem}
\newtheorem{lemma}[theorem]{Lemma}
\newtheorem{proposition}[theorem]{Proposition}
\theoremstyle{definition}
\newtheorem{note}[theorem]{Note}
\newtheorem{definition}[theorem]{Definition}
\newtheorem{question}[theorem]{Question}
\newtheorem{example}[theorem]{Example}
\newtheorem{notation}[theorem]{Notation}
\newtheorem{thmx}{Theorem}
\renewcommand{\b}{\leq_B}           
\newcommand{\R}{\mathbb{R}}
\newcommand{\N}{\mathbb{N}}
\newcommand{\Z}{\mathbb{Z}}
\newcommand{\Acc}{\text{Acc}}
\newcommand{\Cl}{\text{Cl}}
\newcommand{\Fix}{\text{Fix}}
\newcommand{\Inj}{\text{Inj}}
\def\subjclass#1{\par\medskip
\noindent\textbf{Mathematics Subject Classification (2010):} #1}
\def\keywords#1{\par\medskip
\noindent\textbf{Keywords.} #1}
\begin{document}
\makeatletter

\title{Classification of one dimensional dynamical systems by countable structures}

\author{Henk Bruin\\ \\
Faculty of Mathematics \\
University of Vienna \\
Oskar Morgensternplatz 1 \\
1090 Vienna, Austria
\\
\texttt{henk.bruin@univie.ac.at}
\and 
Benjamin Vejnar\footnote{This work has been supported by Charles University Research Centre program No.UNCE/SCI/022.}
\\ \\
Faculty of Mathematics and Physics \\
Charles University \\
Prague, Czechia\\ \\
\texttt{vejnar@karlin.mff.cuni.cz}}

\maketitle

\begin{abstract}
We study the complexity of the classification problem of conjugacy on dynamical systems on some compact metrizable spaces. 
Especially we prove that the conjugacy equivalence relation of interval dynamical systems is Borel bireducible to 
isomorphism  equivalence relation of countable graphs. This solves a special case of the Hjorth's conjecture which states that every orbit equivalence relation induced by a continuous action of the group of all homeomorphisms of the closed unit interval is classifiable by countable structures.
We also prove that conjugacy equivalence relation of Hilbert cube homeomorphisms is Borel bireducible to the universal orbit equivalence relation.
\end{abstract}

\subjclass{Primary: 54H20 Secondary: 03E15, } 
\keywords{Conjugacy, classification, Borel reduction, countable structures, universal orbit equivalence relation}


\section{Introduction}
 Measuring the complexity of relations on structures is a very general task. 
 In this paper we use the notion of \emph{Borel reducibility} (see Definition~\ref{def:Borel_red})
 and the results of invariant descriptive set theory to compare the complexities of classification problems. For more details on invariant descriptive set theory we refer to the book by Gao \cite{Gao}. For a short and nice introduction to the theory of Borel reductions we refer to a paper by Foreman \cite{ForemanIntroduction}.

 Several equivalence relations  became milestones in this theory. Let us mention four of those, which describe an increasing chain of complexities:
\begin{itemize}[noitemsep]
\item the equality on an uncountable Polish space,
\item the equality of countable sets of real numbers,
\item the $S_\infty$-universal orbit equivalence relation ($S_\infty$ is the group of permutations on $\N$), 
\item the universal orbit equivalence relation.
\end{itemize}

Let us give several \emph{examples} to make the reader more familiar with the above relations.
A classical example is a result of Gromov (see e.g. \cite[Theorem~14.2.1]{Gao}) who proved that the isometry equivalence relation of compact metric spaces is a \emph{smooth} equivalence relation, which means that it is Borel reducible to the equality of real numbers (or equivalently of an uncountable Polish space).
The isomorphism relation of countable graphs or the isomorphism relation of countable linear orders are Borel bireducible to the $S_\infty$-universal orbit equivalence.
The homeomorphism equivalence relation of compact metrizable spaces and the isometry relation of separable complete metric spaces were proved by Zielinski in \cite{Zielinski} and by Melleray in \cite{Melleray}, respectively, to be Borel bireducible to the universal orbit equivalence relation 
(see the survey paper by Motto Ros~\cite{MottoRos}). 

In order to capture all the structures in one space we need some sort of \emph{coding}. This can be done by considering some universal space (e.g. the Hilbert cube or the Urysohn space)
and all its subspaces with some natural Polish topology or Borel structure (e.g the hyperspace topology or the Effros Borel structure).
Sometimes there are other natural ways to encode a given structure. For example the class of separable complete metric spaces can be coded by the set of all metrics on $\N$ where two metrics are defined to be equivalent if the completions of the respective spaces are isometric. Fortunately in this case, by \cite[Theorem 14.1.3]{Gao} it does not matter which of the two coding we choose.
It is generally believed that this independence on a natural coding is common to other structures and thus the statements are usually formulated for all structures without mentioning the current coding. Nevertheless, for the formal treatment some coding is always necessary. 

The aim of this paper is to determine the complexity of some classification problems of dynamical systems up to conjugacy. Dynamical systems of a fixed compact metrizable space $X$ can be naturally coded as a space of continuous mappings of $X$ into itself, with the uniform topology. This one as well as the subspace of all self-homeomorphisms is well known to be a Polish space.

Let us mention several results which are dealing with the complexity of conjugacy equivalence relation.
It was proved by Hjorth that conjugacy equivalence relation of homeomorphisms of [0,1] is classifiable by countable structures \cite[Section 4.2]{Hjorth} (in fact Borel bireducible to the universal $S_\infty$-orbit equivalence relation) but conjugacy of homeomorphisms of $[0,1]^2$ is not \cite[Section 4.3]{Hjorth}.
By a result of Camerlo and Gao, conjugacy equivalence relation of both selfmaps and homeomorphisms of the Cantor set are Borel bireducible to the $S_\infty$-universal orbit equivalence relation  \cite[Theorem~5]{CamerloGao}.
Kaya proved that conjugacy of pointed minimal Cantor dynamical systems is Borel bireducible to the equality of countable subsets of reals \cite{Kaya}. 
Conjugacy of odometers is smooth due to Buescu and Stewart \cite{BuescuStewart}.
The complexity of conjugacy of Toepliz subshifts was treated several times -- by Thomas, Sabok and Tsankov, 
and by Kaya \cite{Thomas, SabokTsankov, KayaSubshift}.
Conjugacy of two-sided subshifts is Borel bireducible to the universal countable Borel equivalence relation
due to Clemens \cite{Clemens}. There is an extensive exposition of results on the complexity of conjugacy equivalence relation on subshifts of $2^G$ for a countable group $G$ in the book by Gao, Jackson and Seward \cite[Chapter 9]{GaoJacksonSeward}.
Recently, during the 8th Visegrad Conference on Dynamical Systems in 2019 it was announced by Dominik Kwietniak that conjugacy of shifts with specification is Borel bireducible to the universal countable Borel equivalence relation.

In this paper, we deal with some of the missing parts.
By mainly elementary and standard tools (excluding the complexity level of countable structures), we prove:

\begin{thmx}[see Theorem \ref{bireducible2}]
The conjugacy equivalence relation of interval maps is Borel bireducible to the $S_\infty$-universal orbit equivalence relation. 
\end{thmx}

Also, we prove:

\begin{thmx}[see Theorem \ref{Hilbert}]
The conjugacy equivalence relation of homeomorphisms as well as conjugacy of selfmaps of the Hilbert cube is Borel bireducible to the universal orbit equivalence relation.
\end{thmx}

 To this end we use some tools of infinite dimensional topology and a result of Zielinski on the complexity of homeomorphism equivalence relation of metrizable compacta \cite{Zielinski} combining with some ideas of P. Krupski and the second author \cite{KrupskiVejnar}. 
Finally we make a small overview on the complexity of conjugacy equivalence relation of dynamical systems on the Cantor set, on the interval, on the circle and on the Hilbert cube.

\section{Definitions and notations}

Let us define some standard notions from descriptive set theory (see e.g. \cite{Kechris}). A \emph{Polish space} is a separable completely metrizable topological space.
Recall that a  \emph{standard Borel space} is a measurable space $(X, \mathcal S)$ such that there is a 
Polish topology $\tau$ on $X$ for which the family of Borel subsets of $(X,\tau)$ is equal to $\mathcal S$. In order to compare the complexities of equivalence relations we use the notion of Borel reducibility.

\begin{definition}\label{def:Borel_red}
Suppose that $X$ and $Y$ are sets and let $E$, $F$ be equivalence relations on $X$ and $Y$ respectively. 
We say that $E$ is \emph{reducible} to $F$, and we denote this by $E\leq F$, if there exists a mapping $f\colon X\to Y$ such that
\[x E x' \iff f(x) F f(x'),\]
for every $x, x'\in X$.
The mapping $f$ is called a \emph{reduction} of $E$ into $F$.
If the sets $X$ and $Y$ are endowed with Polish topologies (or standard Borel structures), we say that $E$ is \emph{Borel reducible} to $F$, and we write $E\leq_B F$, if there is a reduction $f\colon X\to Y$ of $E$ into $F$ which is Borel measurable.
We say that $E$ is \emph{Borel bireducible} to $F$, and we write $E \sim_B F$, if $E$ is Borel reducible to $F$ and $F$ is Borel reducible to $E$.
\end{definition} 
In a similar fashion we  define being \emph{continuously reducible} if in addition $X$ and $Y$ are Polish spaces and $f$ is continuous.

In the whole paper we set $\mathbb N$ for all positive integers, $I=[0,1]$ and we denote the closure operator by $\Cl$.
For a separable metric space $X$ we denote by $\mathcal K(X)$ the hyperspace of all compacta in $X$ with the 
Hausdorff distance $d_H$ and the corresponding Vietoris topology. 
If $X$ is a Polish space $\mathcal K(X)$ is known to be Polish.
For compact metric spaces $X, Y$ we consider the space $C(X, Y)$ of all continuous mappings of $X$ into $Y$ with the supremum metric. In this way we get a Polish space. Especially the collection of all continuous selfmaps of $X$ is denoted shortly by $C(X)$. We also denote by $\Inj(X, Y)$ the collection of all embeddings of $X$ into $Y$ and by $\mathcal H(X)$ the collection of all homeomorphisms of $X$. These are again known to be Polish spaces.


The equality equivalence relation of real numbers is denoted as $E_=$. We denote by $E_{=^+}$ the equivalence relation on $\R^{\N}$ defined by $(a_n)E_{=^+} (b_n)$ if and only if $\{a_n\colon n\in\N\}=\{b_n\colon n\in\N\}$. The last equivalence relation is called the \emph{equality of countable sets}.

We say that an equivalence relation $E$ defined on a standard Borel space $X$ is \emph{classifiable by countable structures} if there is a countable relational language $\mathcal L$ such that $E$ is Borel reducible to the isomorphism relation of $\mathcal L$-structures whose underlying set is $\N$.
An equivalence relation $E$ on a standard Borel space $X$ is said to be an \emph{orbit equivalence relation} if there is a Borel action of a Polish group $G$ on $X$ such that $xEx'$ if and only if  there is some $g\in G$ for which $gx=x'$.

Let $\mathcal C$ be a class of equivalence relations on standard Borel spaces. An element $E\in\mathcal C$ is called \emph{universal} for $\mathcal C$ if $F\leq_B E$ for every $F\in\mathcal C$.
It is known that for every Polish group $G$ there is an equivalence relation (denoted by $E_G$) on a standard Borel space that is universal for all orbit equivalence relations given by continuous $G$-actions.
We are particularly interested in the \emph{universal $S_\infty$-equivalence relation} $E_{S_\infty}$, where $S_\infty$ is the group of all permutations of $\N$.
It is known that an equivalence relation is classifiable by countable structures if and only if it is Borel reducible to $E_{S_\infty}$. Moreover $E_{S_\infty}$ is known to be Borel bireducible to isomorphism equivalence relation of countable graphs.
Also there exists a \emph{universal orbit equivalence relation} which is denoted by $E_{G_\infty}$. We should also note that all the mentioned equivalence relations are analytic sets, i.e. images of standard Borel spaces with respect to a Borel measurable map.
We have a chain of complexities
\[E_=\b E_{=^+}\b E_{S_\infty}\b E_{G_\infty}\]
and it is known that none of these Borel reductions can be reversed.

\section{Interval dynamical systems}  

 In this section we prove that conjugacy of interval dynamical systems is classifiable by countable structures.
The strategy of our proof 
is as follows. In the first part we describe a natural reduction of interval dynamical systems to some kind of countable structures.
We assign to every $f\in C(I)$ a countable invariant set $C_f\subseteq I$ of some dynamically exceptional points for $f$. Since the set $C_f$ does not need to be dense in $I$ we do not have enough information to capture the dynamics of $f$ by restricting to $C_f$. On the other hand the dynamics on the maximal open intervals of $I\setminus C_f$ is quite simple. Hence it will be enough to define an invariant countable dense subset $D_f$ in $I\setminus\Cl(C_f)$ arbitrarily. Consequently, we get that for $f$ conjugate to $g$ there exists a conjugacy of $f$ to $g$ which sends the set $C_f\cup D_f$ onto $C_g\cup D_g$.
Finally it is enough to assign to every $f\in C(I)$ a countable structure $\Psi(f)$ whose underlying set is $C_f\cup D_f$ and which is equipped with one binary relation $\leq\restriction_{C_f\cup D_f}$ and one mapping $f\restriction_{C_f\cup D_f}$ (which can be as usual considered as a binary relation). 
We will prove then that if two such structures $\Psi(f)$ and $\Psi(g)$ are isomorphic then $f$ and $g$ are conjugate.

In the second part we prove that this reduction can be modified using some sort of coding so that the assigned countable structures share the same support and so that the new reduction is Borel. To this end we use Lusin-Novikov selection theorem \cite[Theorem 18.10]{Kechris} several times.

For $g\in C(I)$ we denote by $\Fix(g)$ the set of fixed points of $g$, i.e. those points for which $g(x)=x$.
We omit the proof of the following ``folklore'' lemma. The key idea of the proof is the back and forth argument.

\begin{lemma}\label{folk}
Let $f, g\in C(I)$ be increasing homeomorphisms such that $\Fix(f)=\Fix(g)=\{0,1\}$ and let $A, B\subseteq (0,1)$ be countable dense sets that are invariant in both directions for $f$ and $g$ respectively.
Then there is a conjugacy $h$ of $f$ and $g$ satisfying $h(A)=B$.
\end{lemma}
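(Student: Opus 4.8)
The plan is to collapse the equivariant matching problem onto a single fundamental domain, where the dynamical constraint evaporates and the classical Cantor back-and-forth for countable dense linear orders can be applied directly.

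First I would normalize the direction of motion. Since $f$ is an increasing homeomorphism with $\Fix(f)=\{0,1\}$, the continuous function $f(x)-x$ is nonzero on $(0,1)$ and hence has constant sign there; the same holds for $g$. If both signs are negative I replace $f,g$ by $f^{-1},g^{-1}$, which is harmless since $A,B$ are invariant in both directions and a conjugacy of the inverses is a conjugacy of the originals. If the signs disagree I conjugate $g$ by the reflection $\sigma(x)=1-x$ and replace $B$ by $\sigma(B)$: an increasing conjugacy $h_1$ of $f$ to $\sigma g\sigma$ with $h_1(A)=\sigma(B)$ then yields the (decreasing) conjugacy $h=\sigma\circ h_1$ of $f$ to $g$ with $h(A)=B$. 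So I may assume $f(x)>x$ and $g(x)>x$ for all $x\in(0,1)$.

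Next I fix base points $a^*\in A$ and $b^*\in B$. Because $f(x)>x$ on $(0,1)$ and $\Fix(f)=\{0,1\}$, the orbit $(f^n(a^*))_{n\in\Z}$ is strictly increasing with $f^n(a^*)\to0$ as $n\to-\infty$ and $f^n(a^*)\to1$ as $n\to+\infty$, so the half-open intervals $[f^n(a^*),f^{n+1}(a^*))$ tile $(0,1)$. Writing each $x\in(0,1)$ uniquely as $x=f^n(u)$ with $u\in[a^*,f(a^*))$ gives an order isomorphism of $(0,1)$ with $\Z\times[a^*,f(a^*))$ under the lexicographic order, and similarly for $g$ and $b^*$. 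In these coordinates an increasing conjugacy with $a^*\mapsto b^*$ is exactly a map of the form $f^n(u)\mapsto g^n(h_0(u))$ for an increasing bijection $h_0\colon[a^*,f(a^*))\to[b^*,g(b^*))$: equivariance becomes automatic, and order-preservation of the global map reduces to order-preservation of $h_0$ on the single domain. It then remains to build $h_0$ taking $A$-points to $B$-points. The sets $A\cap(a^*,f(a^*))$ and $B\cap(b^*,g(b^*))$ are countable dense linear orders without endpoints, so Cantor's back-and-forth theorem furnishes an order isomorphism between them; sending $a^*\mapsto b^*$ extends it to $h_0$. Propagating $h_0$ through the coordinates produces an order isomorphism $h\colon A\to B$ with $h\circ f=g\circ h$ on $A$; as $A,B$ are dense, $h$ extends uniquely to a strictly increasing bijection of $I$ fixing $0$ and $1$, which is automatically a homeomorphism, and the identity $h\circ f=g\circ h$ passes from the dense set $A$ to all of $I$ by continuity. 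By construction $h(A)=B$.

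The one genuinely delicate point is the interplay between the linear order and the dynamics. A direct back-and-forth on $A$ would be forced at each step to commit to the image of an entire bi-infinite orbit, which makes global order-consistency awkward to control. The fundamental-domain coordinatization is precisely the device that decouples the order and the equivariance, reducing the equivariant matching to an ordinary Cantor matching; the small technical heart of the argument is the verification that the lexicographic coordinates constitute an order isomorphism, so that the order on $(0,1)$ is faithfully recovered from the order on one domain.
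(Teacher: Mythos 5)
Your proof is correct. A caveat on the comparison: the paper does not actually print a proof of this lemma; it declares it folklore and records only that ``the key idea of the proof is the back and forth argument.'' Measured against that hint, your argument is the same proof in substance but organized differently, and the difference is worth noting. The hinted route is a bespoke equivariant back-and-forth directly on $A$ and $B$: each time a point is matched, its entire bi-infinite orbit must be matched with it, and order-compatibility of whole interleaved orbits has to be checked at every stage. Your fundamental-domain coordinatization $(0,1)\cong\Z\times[a^*,f(a^*))$ (lexicographic) does that bookkeeping once and for all: equivariance of $f^n(u)\mapsto g^n(h_0(u))$ is automatic, the interleaving data is exactly the position of a point inside its fundamental-domain interval, and the only matching left is an order isomorphism between the countable dense endpoint-free orders $A\cap(a^*,f(a^*))$ and $B\cap(b^*,g(b^*))$, which is precisely Cantor's theorem --- itself proved by back-and-forth, so the back-and-forth has simply been localized to a single domain where the dynamics is invisible. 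All the supporting steps check out: the sign of $f-\mathrm{id}$ on $(0,1)$ is constant since $\Fix(f)=\{0,1\}$; an orientation-reversing conjugacy is genuinely necessary when the signs of $f-\mathrm{id}$ and $g-\mathrm{id}$ disagree (an increasing conjugacy preserves that sign), and your reflection trick handles it, with the minor remark that when the signs disagree and $f<\mathrm{id}$ you must compose the two reductions (reflect, then invert); the monotone extension of the order isomorphism $A\to B$ to $I$ is automatically a homeomorphism; and $hf=gh$ passes from the dense set $A$ to $I$ by continuity. So this is a complete and arguably cleaner write-up of the folklore argument the paper chose to omit.
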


\begin{definition}
For $f\in C(I)$ let us say that a point $z\in I$ is a \emph{left sharp local maximum} of $f$ if there is some $\delta>0$ such that $f(x)<f(z)$ for $x\in (z-\delta, z)$ and $f(x)\leq f(z)$ for $x\in (z, z+\delta)$. In a similar fashion we define \emph{left sharp local minimum}, \emph{right sharp local minimum} and \emph{right sharp local maximum}.
\end{definition}

\begin{notation}\label{conditions}
Let $M_f$ be the union of $\{0,1\}$ and the set of all left and right sharp local maxima and minima. It is easily shown that the set $M_f$ is countable.
For a closed set $F\subseteq I$ denote by $\Acc(F)$ the set of all accessible points of $F$ in $\mathbb R$, i.e. those points $x\in F$ for which there exists an open interval $(a,b)\subseteq \mathbb R\setminus F$ for which $x=a$ or $x=b$. 

For every $f\in C(I)$ let us denote by $C_f$ the smallest set such that
\begin{itemize}
\item[a)] $M_f\subseteq C_f$,
\item[b)] if $f^{-1}(y)$ contains an interval then $y\in C_f$,
\item[c)] if $n\in\mathbb N$ then $\Acc(\Fix(f^n))\subseteq C_f$,
\item[d)] $f(C_f)\subseteq C_f$,
\item[e)] if $y\in C_f$ then $\Acc(f^{-1}(y))\subseteq C_f$.
\end{itemize}
\end{notation}

\begin{lemma}
The set $C_f$ is countable for every $f\in C(I)$.
\end{lemma}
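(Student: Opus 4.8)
The plan is to exhibit one explicitly constructed countable set that already satisfies conditions a)--e), and then to invoke the minimality of $C_f$ to conclude that $C_f$ is contained in it. Since each of the conditions a)--e) is preserved under taking intersections (the seed conditions are of the form ``contains a fixed set'' and the closure conditions are of the form ``closed under an operation''), the family of sets satisfying a)--e) is closed under arbitrary intersection and contains $I$ itself; hence the smallest such set $C_f$ is well defined, and it suffices to produce a single countable member of this family.

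First I would collect the three ``seed'' sets prescribed by a)--c) and check that each is countable. The set $M_f$ is countable by the remark in Notation~\ref{conditions}. The set $\{y : f^{-1}(y) \text{ contains a nondegenerate interval}\}$ is countable because for distinct values $y$ the fibers $f^{-1}(y)$ are pairwise disjoint, so the intervals they contain form a family of pairwise disjoint nondegenerate intervals in $I$, of which there can be only countably many. For each $n$ the set $\Fix(f^n)=\{x:f^n(x)=x\}$ is closed, and for any closed $F\subseteq I$ the set $\Acc(F)$ consists of endpoints of the at most countably many maximal open intervals of $\R\setminus F$, hence is countable; taking the union over $n\in\N$ keeps it countable. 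Let $S$ denote the union of these three countable sets.

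Next I would close $S$ under the two remaining operations by a countable iteration. Set $S_0=S$ and $S_{k+1}=S_k\cup f(S_k)\cup\bigcup_{y\in S_k}\Acc(f^{-1}(y))$. The key observation is that each operation sends a single point to at most countably many points: $f$ sends $x$ to the single point $f(x)$, and for any $y$ the fiber $f^{-1}(y)$ is closed, so $\Acc(f^{-1}(y))$ is countable by the same argument as above. Hence if $S_k$ is countable then $f(S_k)$ is countable and $\bigcup_{y\in S_k}\Acc(f^{-1}(y))$ is a countable union of countable sets, so $S_{k+1}$ is countable. By induction every $S_k$ is countable, and therefore $C:=\bigcup_k S_k$ is countable.

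Finally I would verify that $C$ satisfies a)--e). Conditions a)--c) hold because $S\subseteq C$. For d), any $x\in C$ lies in some $S_k$, whence $f(x)\in S_{k+1}\subseteq C$; for e), any $y\in C$ lies in some $S_k$, whence $\Acc(f^{-1}(y))\subseteq S_{k+1}\subseteq C$. Thus $C$ belongs to the family of sets satisfying a)--e), and by minimality $C_f\subseteq C$, so $C_f$ is countable. I expect the only genuinely delicate points to be the two countability facts used repeatedly---that pairwise disjoint nondegenerate intervals in $I$ are at most countably many, and that $\Acc$ of a closed set is countable---both of which are standard; the remainder is bookkeeping over the iteration.
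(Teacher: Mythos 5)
Your proof is correct and follows essentially the same route as the paper's: the paper also takes the countable seed set given by conditions a)--c) and iterates the closure operations $S_{i+1}=S_i\cup f(S_i)\cup\bigcup\{\Acc(f^{-1}(y))\colon y\in S_i\}$, concluding that the union is countable. The only difference is that you spell out the details the paper labels ``clearly''---the countability of the seed sets and of $\Acc$ of a closed set, and the minimality argument showing $C_f$ is contained in the constructed set rather than asserting equality outright.
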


\begin{proof}
Let $S_1$ be the union of $M_f$, all the values of $f$ at locally constant points and all the sets $\Acc(\Fix(f^n))$ for $n\in\mathbb N$.
Clearly $S_1$ is countable. 
Let $S_{i+1}=S_i\cup f(S_i)\cup \bigcup\{\Acc(f^{-1}(y))\colon y \in S_i\}$. 
Clearly $C_f=\bigcup\{S_i\colon i\in\N\}$ and thus it is countable.
\end{proof}

Note that $C_f$ depends only on the topological properties of $I$ and the dynamics of $f$. That is if $f$ and $g$ are conjugate by some homeomorphism $h$, then $h(C_f)=C_g$. 
This is clear because $h$ maps $M_f$ onto $M_g$, locally constant intervals of $f$ to locally constant intervals of $g$ and periodic points of $f$ to periodic points of $g$.

Let us denote by $\mathcal J_f$ be the collection of all maximal open subintervals of $I\setminus C_f$. 

\begin{lemma}\label{complementaryintervals}
Let $J\in \mathcal J_f$. Then either $f\restriction_J$ is constant or $f\restriction_J$ is one to one and in this case $f(J)\in \mathcal J_f$. Also $f^{-1}(J)$ is the finite union (possibly the empty union) of elements of $\mathcal J_f$.
\end{lemma}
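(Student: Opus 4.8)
The plan is to prove the three assertions in turn, in each case exploiting that $J$ is disjoint from $C_f$ and hence, by condition (a), contains no point of $M_f$, i.e. no sharp local extremum of $f$. For the dichotomy I would show that a continuous function on $J$ with no sharp local extremum is either constant or strictly monotone, the latter being the same as one to one since a continuous injection on an interval is automatically strictly monotone. Concretely, assuming $f\restriction_J$ is neither constant nor injective, I would produce a point of $M_f$ inside $J$. If $f$ is not injective there are $x_1<x_2$ in $J$ with $f(x_1)=f(x_2)$; when $f$ is non-constant on $[x_1,x_2]$ its maximum or its minimum there is attained at an interior point, and the left-most such point is a one-sided sharp local extremum. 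The remaining possibility is that $f$ is constant on every such $[x_1,x_2]$, so that the level set $f^{-1}(f(x_1))\cap J$ is a nondegenerate subinterval; since $f\restriction_J$ is not constant, one endpoint $t$ of this subinterval lies in $J$, and $f-f(t)$ keeps a constant sign on a one-sided neighbourhood of $t$ (being continuous and nonzero there), exhibiting $t$ as a sharp local maximum or minimum. Either way we contradict $J\cap C_f=\emptyset$.

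For the image, once $f\restriction_J$ is one to one it is a strictly monotone homeomorphism of $J=(a,b)$ onto the open interval $f(J)$. I would first check $f(J)\cap C_f=\emptyset$: if $y\in C_f$ had a preimage $x\in J$, then injectivity of $f$ on $J$ makes $x$ an accessible point of $f^{-1}(y)$, so condition (e) forces $x\in C_f$, contradicting $x\in J$. Thus $f(J)$ lies in some element $J'\in\mathcal J_f$, and it remains to prove $f(J)=J'$. Assuming $f(J)\subsetneq J'$, an endpoint of $f(J)$, say $f(a)$, lies in the interior of $J'$, so a one-sided neighbourhood of $f(a)$ avoids $C_f$. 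I then contradict this: if $a\in C_f$ then $f(a)\in C_f$ by condition (d); otherwise maximality of $J$ yields points of $C_f$ increasing to $a$ from the left, and pushing them forward through $f$ (using continuity of $f$ and condition (d)) produces points of $C_f$ converging to $f(a)$ from the correct side, hence inside $J'$. In both cases $J'$ meets $C_f$, a contradiction, so $f(J)=J'\in\mathcal J_f$.

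For the preimage, note first that $f^{-1}(J)$ is open and, by condition (d), disjoint from $C_f$ (a point of $C_f$ mapping into $J$ would place a point of $C_f$ in $J$). The key step is that any $K\in\mathcal J_f$ meeting $f^{-1}(J)$ is contained in it: applying the dichotomy to $K$, the constant case is impossible, since the constant value would lie in $J$ yet belong to $C_f$ by condition (b); in the injective case $f(K)\in\mathcal J_f$ by the previous part, and as $f(K)$ meets the element $J$ of $\mathcal J_f$ it must equal $J$, whence $K\subseteq f^{-1}(J)$. Therefore $f^{-1}(J)$ is exactly the union of those elements of $\mathcal J_f$ it contains, and these are precisely its connected components. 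Finiteness I would obtain by contradiction: infinitely many such components must cluster at some $x^\ast$, and since each maps onto all of $J$, points arbitrarily close to $x^\ast$ take $f$-values arbitrarily close to both endpoints of $J$, incompatible with continuity of $f$ at $x^\ast$.

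I expect the main obstacle to be the maximality step in the image assertion, where $C_f$ need not be closed, so one cannot simply assert that the endpoints of elements of $\mathcal J_f$ lie in $C_f$; the transport of approximating sequences through $f$ via condition (d) and continuity is what makes this work. A secondary difficulty is the bookkeeping in the dichotomy covering the mixed case where $f$ is partly constant and partly monotone, which is where the constant-sign observation on a one-sided neighbourhood is needed to locate a sharp extremum.
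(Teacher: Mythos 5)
Your proposal is correct, and while its first two parts track the paper's argument, the third part takes a genuinely different route. For the dichotomy and the image you do essentially what the paper does: non-injectivity plus non-constancy forces a one-sided sharp local extremum inside $J$ (the paper gets this in one stroke, taking $v=\max\left(f^{-1}(u)\cap[x,z]\right)$ for $u=\min f\restriction_{[x,z]}$, where you split into the ``non-constant between equal values'' case and the ``level sets are intervals'' case; both work), disjointness of $f(J)$ from $C_f$ comes from accessibility and condition (e), and maximality comes from sequences of $C_f$-points converging to the endpoints of $J$, pushed forward by continuity and condition (d). For the preimage, however, the paper argues from the inside: it takes a maximal interval $(c,d)$ of $f^{-1}(J)$, shows $f(c),f(d)\in\{a,b\}$ with $f(c)\neq f(d)$, hence $f((c,d))=J$, and then proves $(c,d)\in\mathcal J_f$ by a delicate analysis at the endpoints (either $c$ is a one-sided sharp extremum, or one builds $c_n=\max\left([0,c]\cap f^{-1}(a_n)\right)\in\Acc(f^{-1}(a_n))\subseteq C_f$ with $c_n\to c$). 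You argue from the outside: any $K\in\mathcal J_f$ meeting $f^{-1}(J)$ is contained in it, because the already-proven dichotomy applies to $K$, the constant case is killed by condition (b) -- a condition the paper's preimage argument never uses, since there constancy is impossible once the endpoint values are known to lie outside $J$ -- and the injective case gives $f(K)\in\mathcal J_f$ meeting $J$, hence $f(K)=J$ by disjointness of distinct members of $\mathcal J_f$. Reusing the first two assertions this way buys you a shorter argument with no endpoint bookkeeping, at the mild cost of the extra observation that the components of the open set $f^{-1}(J)$ are precisely such $K$'s; the paper's version is more self-contained but fiddlier. Both proofs extract finiteness identically: infinitely many disjoint intervals each mapping onto $J$ would accumulate at a point where continuity of $f$ fails.
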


\begin{proof}
Let us prove first that $f\restriction_J$ is either constant or one-to-one. Suppose that the contrary holds. Then there are points $x, y, z\in J$ such that $f(x)=f(y)\neq f(z)$ and $x\neq y$. Let us suppose that $x<y<z$ and $f(x)<f(z)$ (the other possibilities are just easy modifications). Let $u=\min f\restriction_{[x,z]}$ and let $v=\max (f^{-1}(u)\cap [x, z])$. 
It follows that $v\in (x,z)$ is a right sharp local minimum. By Notation~\ref{conditions} a) it follows that $v\in C_f$ which is a contradiction since $J$ is disjoint from $C_f$.

Suppose now that $f\restriction_J$ is one-to-one and let us prove that $f(J)\in\mathcal J_f$. 
Observe first that $f(J)$ is disjoint from $C_f$, otherwise there would be a point $y\in f(J)\cap C_f$ and since
$f^{-1}(y)$ is a closed set not containing the whole set $J$ there will be a point in $\Acc f^{-1}(y)\cap J$ which is a contradiction with Notation~\ref{conditions} e).
We need to prove that $f(J)$ is a maximal interval disjoint from $C_f$. Suppose that $J=(a,b)$. Then there are $a_n, b_n\in C_f$ such that $a_n\to a, b_n\to b$. By continuity of $f$ it follows that $f(a_n)\to f(a)$ and $f(b_n)\to f(b)$. Also $f(a_n), f(b_n)\in C_f$ by Notation~\ref{conditions} d). Thus the maximality follows.

Observe first that $f^{-1}(J)$ is a countable union of disjoint collection of open intervals and if we prove that each of the intervals is mapped by $f$ onto $J$ it will follow by continuity that such a collection is in fact finite.
Denote $(a,b)=J$ and let $(c,d)$ be a maximal interval in $f^{-1}(J)$. Clearly $(c,d)\cap C_f=\emptyset$ by Notation~\ref{conditions} d), so it is enough to prove that it is maximal with this property. 
Note that $f(c), f(d)\in \{a, b\}$ otherwise we get a contradiction with $(c,d)$ being maximal interval in $f^{-1}(J)$. Also it can not happen that $f(c)=f(d)$ otherwise there will be a point of left local maximum or minimum in $(c,d)$ which would produce a point in $M_f\cap (c,d)$, which in turn would give a point in $C_f\cap J$, by Notation \ref{conditions} a), d). 
Hence $f((c,d))=J$. Moreover, by the first part of this proof we get that $f\restriction_{(c,d)}$ is one-to-one and thus it is either increasing or decreasing. Without loss of generality suppose the first case. Let us distinguish several possibilities.
If $f\geq f(c)$ on a left neighborhood of $c$ then $c$ is a point of right sharp local minimum and thus $c\in C_f$. Otherwise choose a sequence $a_n\in C_f$ such that $a_n\to a$. We define points $c_n=\max([0,c]\cap f^{-1}(a_n))$. These are eventually well defined, $c_n\to c$ and $c_n\in\Acc f^{-1}(a_n)$. Hence by Notation \ref{conditions} e) $c_n\in C_f$. We can proceed in a similar way with the point $d$ and thus the interval $(c,d)$ is maximal subinterval of $I\setminus C_f$.
\end{proof}

\begin{example}
For the tent map $f(x) =\min\{ 2x, 2(1-x)\}$, the set  $C_f$ contains all the dyadic numbers in $I$, 
thus $C_f$ is a dense subset of $I$ and hence $\mathcal J_f=\emptyset$.
For the map $g=\frac14 f$ we have 
\begin{align*}
C_g&=\{2^{-n}, 1-2^{-n}\colon n\in\mathbb N\}\cup\{0,1\}, \\
\mathcal J_g&=\{(2^{-n-1}, 2^{-n}), (1-2^{-n}, 1-2^{-n-1})\colon n\in\mathbb N\}.
\end{align*}
\end{example}

\begin{notation}
Let $G_f$ be a directed graph on $\mathcal J_f$ where $(J,K)$ forms an oriented edge if and only if $f(J)=K$. Note that for every $K\in \mathcal J_f$ there are only finitely many $J\in\mathcal J_f$ for which $f(J)=K$. Hence every vertex of the graph $G_f$ admits only finitely many arrows to enter.
Let $E_f=\mathbb Q\cap I\setminus \Cl(C_f)$ and let
\[D_f=\bigcup_{n\in\mathbb Z} f^n(E_f).\]
Note that the union is taken over all integers. In spite of that it follows by Lemma~\ref{complementaryintervals} that $D_f$ is countable.
Let us define 
\[\Psi(f)=(C_f\cup D_f, \leq\restriction_{C_f\cup D_f}, f\restriction_{C_f\cup D_f}).\]
\end{notation}

\begin{theorem}\label{conjugacy}
The mapping $\Psi$ is a reduction of orientation preserving conjugacy of interval dynamical systems to the isomorphism relation of countable structures.
\end{theorem}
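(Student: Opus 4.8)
The statement asserts that $\Psi$ is a reduction, so I must establish both implications of the equivalence $f\sim g\iff\Psi(f)\cong\Psi(g)$, where $\sim$ denotes orientation-preserving conjugacy. I would begin with the implication $\Psi(f)\cong\Psi(g)\Rightarrow f\sim g$, which I expect to be the clean direction. Note first that $C_f\cup D_f$ is a countable subset of $I$ containing $0,1$ (since $\{0,1\}\subseteq M_f\subseteq C_f$) and dense in $I$: indeed $D_f$ is dense in $I\setminus\Cl(C_f)$ because it contains $E_f=\mathbb Q\cap(I\setminus\Cl(C_f))$, whence $\Cl(C_f\cup D_f)\supseteq\Cl(C_f)\cup(I\setminus\Cl(C_f))=I$. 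A structure isomorphism $\phi\colon C_f\cup D_f\to C_g\cup D_g$ is in particular an order-isomorphism between two countable dense subsets of $I$ with matching endpoints, hence extends uniquely to an increasing homeomorphism $h\colon I\to I$. Since $f(C_f\cup D_f)\subseteq C_f\cup D_f$ by Notation~\ref{conditions}(d) and the invariance of $D_f$, the intertwining identity $\phi(f(x))=g(\phi(x))$ reads $h(f(x))=g(h(x))$ on the dense set $C_f\cup D_f$; as $h\circ f$ and $g\circ h$ are continuous they must agree on all of $I$, so $h$ is the desired orientation-preserving conjugacy.

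For the converse I would start from an increasing conjugacy $h$ with $g=h\circ f\circ h^{-1}$. As observed after Notation~\ref{conditions}, $h(C_f)=C_g$; consequently $h$ carries $I\setminus\Cl(C_f)$ onto $I\setminus\Cl(C_g)$, maps $\mathcal J_f$ bijectively onto $\mathcal J_g$, and by Lemma~\ref{complementaryintervals} induces an isomorphism of the graphs $G_f$ and $G_g$. The only obstruction to $h$ itself restricting to a structure isomorphism is that $h$ need not send $D_f$ onto $D_g$, since these sets are built from the non-invariant set $\mathbb Q$. My plan is therefore to manufacture a new increasing conjugacy $\tilde h$ which agrees with $h$ on $\Cl(C_f)$ but in addition satisfies $\tilde h(D_f)=D_g$; its restriction to $C_f\cup D_f$ is then the required isomorphism.

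The map $\tilde h$ is built one grand orbit of $\mathcal J_f$ at a time. On a periodic cycle $J_0\to\cdots\to J_{p-1}\to J_0$ the first return map $f^p\restriction_{J_0}$ is an increasing homeomorphism of $\overline{J_0}$ whose fixed-point set is exactly its two endpoints: it cannot have an interior fixed point, for such a point would lie in $\Acc(\Fix(f^p))\subseteq C_f$ by Notation~\ref{conditions}(c), contradicting $J_0\cap C_f=\emptyset$ (the degenerate case where all of $J_0$ is fixed is handled trivially by any increasing match of the dense sets). The same holds for $g^p\restriction_{h(J_0)}$, and $D_f\cap J_0$, $D_g\cap h(J_0)$ are countable dense sets invariant in both directions; Lemma~\ref{folk} thus supplies a conjugacy of $f^p\restriction_{J_0}$ to $g^p\restriction_{h(J_0)}$ taking $D_f\cap J_0$ onto $D_g\cap h(J_0)$, which I install as $\tilde h\restriction_{J_0}$ and propagate around the cycle via $\tilde h\restriction_{J_i}=g^i\circ\tilde h\restriction_{J_0}\circ(f^i)^{-1}$; this is consistent precisely because $\tilde h\restriction_{J_0}$ conjugates the two return maps. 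On the remaining pre-periodic, wandering, and constant intervals there is no return-map constraint, so I fix an arbitrary increasing match of the relevant dense sets on a base interval of each acyclic grand orbit and otherwise propagate from the cycle already treated, extending along the tree of the grand orbit by the conjugacy relation; invariance of $D_f$ and $D_g$ keeps the dense sets matched throughout, and a constant interval imposes no forward condition because its image is a single point of $C_f$ already governed by $h$.

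I expect the periodic case to be the main obstacle: it is the only place where genuinely nontrivial interior dynamics must be reconciled with the prescribed dense sets, and it is exactly there that the back-and-forth argument packaged in Lemma~\ref{folk} is indispensable. The remaining work is bookkeeping—verifying that the pieces glue into a single increasing homeomorphism $\tilde h$ of $I$, that the propagation is consistent on each grand orbit, and that $\tilde h$ is a conjugacy with $\tilde h(C_f\cup D_f)=C_g\cup D_g$—after which restricting $\tilde h$ completes the reduction.
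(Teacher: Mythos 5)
Your proposal is correct and takes essentially the same route as the paper's own proof: one direction extends the order isomorphism of the countable dense invariant sets to an increasing homeomorphism and uses density plus continuity, and the other modifies the given conjugacy $h$ only on the intervals of $\mathcal J_f$, applying Lemma~\ref{folk} on a representative of each cycle (with the fixed-point dichotomy coming from condition c) of Notation~\ref{conditions}), choosing an arbitrary increasing matching of the dense sets on acyclic components, and propagating along each component by the conjugacy equations. The differences are purely presentational: you treat the cycle, pre-periodic, wandering and constant intervals in separate cases and isolate the degenerate identity-return-map case, where the paper handles the whole component with a single propagation formula.
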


\begin{proof}
Suppose first that $f$ is conjugate to $g$ via some increasing homeomorphism $h$, that is $f=h^{-1}gh$. We want to find an isomorphism $\varphi\colon \Psi(f) \to \Psi(g)$. Since $h$ does not need to map $D_f$ to $D_g$, we need to do some more work.
In fact we find a conjugacy $\bar h$ of $f$ and $g$ such that $\bar h(C_f\cup D_f)=C_g\cup D_g$. Then it will be enough to define a mapping $\varphi\colon C_f\cup D_f\to C_g\cup D_g$ as the restriction of $\bar h$. We will define $\bar h$ by parts.
First of all we define $\bar h$ on the set $\Cl(C_f)$ in the same way as $h$. 

Clearly $h$ induces an isomorphism of the graphs $(\mathcal J_f, G_f)$ and $(\mathcal J_g, G_g)$.
We will consider the components of the symmetrized graphs $G_f$ and $G_g$.
Note that $J, K\in\mathcal J_f$ are in the same component of $G_f$ if there are $m, n\geq 0$ such that $f^m(J)=f^n(K)$.

Let us distinguish two cases for the components of $G_f$. 
If a component of $G_f$ contains an oriented cycle, choose an element $J$ in there (note that the cycle is unique).
Hence there is $n\in\mathbb N$ such that $f^n(J)=J$.
By using Notation~\ref{conditions} c) it follows that either all the points of $J$ are fixed points for $f^n$ or there are no fixed points of $f^n$ in $J$ and the same has to be true for $g^n$ on $h(J)$. In the first case we just let $\bar h\restriction J$ to be any homeomorphism of $\Cl(J)$ and $\Cl(h(J))$, in the second case
we obtain by Lemma~\ref{folk} that there is a conjugacy $\bar h\restriction_{\Cl(J)}$ of $f^n\restriction_{\Cl(J)}$ and $g^n\restriction_{\Cl(h(J))}$ sending $D_f\cap J$ onto $D_g\cap h(J)$.
In components which do not contain an oriented cycle we choose $J$ arbitrarily, and let $\bar h\restriction_J$ be an arbitrary increasing homeomorphism $J\to h(J)$ which maps $J\cap D_f$ onto $h(J)\cap D_g$.

 For any $K$ that is in the same component as $J$ find 
 $m, n\geq 0$ such that $f^m(J)=f^n(K)\in\mathcal J_f$ and define $\bar h$ on $K$ using the definition of $\bar h$ on $J$ as 
\[(g^{-n}\restriction_{h(K)})g^m\bar h (f^{-m}\restriction_J)f^n.\]

On the other hand suppose that $\varphi$ is an isomorphism of the countable structure $\Psi(f)$ to $\Psi(g)$.
Hence $\varphi\colon C_f\cup D_f\to C_g\cup D_g$ is a bijection preserving the order. Thus it can be extended to an increasing homeomorphism $\tilde{\varphi}\colon I\to I$. We claim that $\tilde\varphi$ conjugates $f$ and $g$.
Consider any point $x\in C_f\cup D_f$ and compute  \[g(\tilde\varphi(x))=g(\varphi(x))=\varphi(f(x))=\tilde\varphi(f(x)),\]
where the middle equality follows form $\varphi$ being an isomorphism of $\Psi(f)$ and $\Psi(g)$.
Since the set $C_f\cup D_f$ is dense it follows by continuity that $g(\tilde\varphi(x))= \tilde\varphi(f(x))$ for every $x\in I$. Hence $f$ and $g$ are conjugate.
\end{proof}

\subsection{Borel coding}
We need to verify that the mapping $\Psi$ that was proved in Theorem \ref{conjugacy} to be a reduction
can be coded in a Borel way. We use standard notation for the Borel hierarchy, especially $\Sigma^0_1$ is used for the collection of all open sets, $\Sigma^0_2$ is used for the collection of  countable unions of closed sets etc. For a set $B\subseteq X\times Y$ and $x\in X$ let us denote by $B_x$ the set $\{y\in Y\colon (x,y)\in B\}$ and call it \emph{vertical section} of $B$.

The following seems to be folklore in descriptive set theory, but for the sake of completeness we include a 
proof.

\begin{proposition}\label{borelclosure}
Let $X, Y$ be Polish spaces and $B\subseteq X\times Y$ be a Borel set with countable vertical sections. Then the set $\bigcup_{x\in X} \{x\}\times \Cl(B_x)$ is Borel as well.
\end{proposition}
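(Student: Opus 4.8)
The plan is to apply the Lusin--Novikov uniformization theorem to the set $B$, whose vertical sections are countable, and then to express membership in the closure by a countable combination of approximation conditions.

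First I would invoke \cite[Theorem 18.10]{Kechris} to write $B=\bigcup_{n\in\N}\mathrm{graph}(f_n)$, where each $f_n\colon D_n\to Y$ is a Borel function defined on a Borel set $D_n\subseteq X$. Consequently, for every $x\in X$ we have $B_x=\{f_n(x):n\in\N,\ x\in D_n\}$, so the countable sections are uniformly described by the fixed family $(f_n)_n$.

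Next, fix a complete metric $d$ on $Y$. A point $y$ lies in $\Cl(B_x)$ exactly when $B_x$ meets every ball about $y$, that is, when for each $k\in\N$ there is some $n$ with $x\in D_n$ and $d(f_n(x),y)<1/k$. Writing $C=\bigcup_{x\in X}\{x\}\times\Cl(B_x)$, this yields the explicit formula
\[
C=\bigcap_{k\in\N}\bigcup_{n\in\N}\{(x,y)\in D_n\times Y: d(f_n(x),y)<1/k\}.
\]
It then remains to check that each inner set is Borel. For fixed $n,k$ the map $(x,y)\mapsto d(f_n(x),y)$ is Borel on the Borel set $D_n\times Y$, being the composition of the Borel map $(x,y)\mapsto(f_n(x),y)$ with the continuous metric $d$, so its sublevel set $\{d(f_n(x),y)<1/k\}$ is Borel. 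Hence $C$ is a countable intersection of countable unions of Borel sets, and is therefore Borel.

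I expect the only genuine content to be the appeal to Lusin--Novikov: once $B$ is resolved into countably many Borel graphs, the countability of the sections is precisely what permits the closure to be captured by the displayed countable quantifier combination, and the rest is routine verification that the constituent sets are Borel. The one point requiring care is that the $f_n$ are only \emph{partial} functions, so the domain restriction $x\in D_n$ must be retained in each term; since every $D_n$ is Borel, this presents no obstacle.
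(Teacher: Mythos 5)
Your proof is correct and takes essentially the same route as the paper: both invoke the Lusin--Novikov theorem to decompose $B$ into countably many Borel graphs and then capture the closure condition by a countable quantifier combination. The only cosmetic difference is that the paper describes the \emph{complement} of $\bigcup_{x\in X}\{x\}\times\Cl(B_x)$ using a countable base of $Y$, whereas you describe the set itself using metric balls of radius $1/k$; these are dual formulations of the same argument (and your explicit handling of the partial domains $D_n$ is a point of care the paper glosses over).
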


\begin{proof}
Let $\mathcal B$ be a countable base for the topology of $Y$. By the Lusin-Novikov selection theorem, we can assume that $B=\bigcup f_n$ for some Borel maps $f_n\colon X\to Y$.
It follows that 
\[(X\times Y)\setminus \left(\bigcup_{x\in X} \{x\}\times \Cl(B_x)\right)=\bigcup_{U\in \mathcal B}\bigcap_{n\in\mathbb N}((X\setminus f_n^{-1}(U))\times U).\]
Hence the set under discussion is Borel.
\end{proof}

Let us denote $\Gamma=\{(K, a)\in\mathcal K(I)\times I\colon a\in \Acc(K)\}$.

\begin{lemma}\label{AccPointsBorel}
The set $\Gamma$ is a $\Sigma^0_2$-set.
\end{lemma}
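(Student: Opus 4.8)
The plan is to split accessibility into a left and a right condition and to parametrise each by rationals, so that the Borel complexity becomes transparent. A point $a$ is accessible from $K$ exactly when it is the endpoint of a complementary interval on at least one side; thus $\Gamma=\Gamma_R\cup\Gamma_L$, where
\[\Gamma_R=\{(K,a)\in\mathcal K(I)\times I:\exists s\in\mathbb{Q},\ a<s\text{ and }(a,s)\cap K=\emptyset\}\]
records the points accessible from the right and $\Gamma_L$ is defined symmetrically with an interval $(r,a)$, $r\in\mathbb{Q}$, $r<a$. The passage to rational endpoints is justified by density: if $(a,b)\subseteq\mathbb{R}\setminus K$ witnesses right-accessibility, then any rational $s\in(a,b)$ gives $(a,s)\cap K=\emptyset$, and conversely such an $s$ produces the required complementary interval. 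By symmetry it suffices to control the complexity of $\Gamma_R$.

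The crux is the following observation, and it is here that the Vietoris topology is actually used: for each fixed $s\in\mathbb{Q}$ the set $O_s=\{(K,a)\in\mathcal K(I)\times I:(a,s)\cap K\neq\emptyset\}$ is \emph{open}. To verify this I would take $(K_0,a_0)\in O_s$, fix a point $x_0\in K_0$ with $a_0<x_0<s$, and set $\eta=\tfrac12\min\{x_0-a_0,\ s-x_0\}>0$. Whenever $d_H(K,K_0)<\eta$ the set $K$ meets the $\eta$-ball around $x_0$, so it contains a point $x$ with $|x-x_0|<\eta$; if in addition $|a-a_0|<\eta$, then $a<x<s$, whence $(a,s)\cap K\neq\emptyset$. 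Thus $O_s$ contains a neighbourhood of each of its points. The only ingredient is the defining property of the Hausdorff metric, namely that points of $K_0$ are approximated by points of any nearby $K$.

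Finally I would assemble the hierarchy count. For fixed $s$ the set $\{(a,s)\cap K=\emptyset\}$ is the complement of $O_s$, hence closed, while $\{a<s\}$ is open; writing $\{a<s\}=\bigcup_{t\in\mathbb{Q},\,t<s}\{a\le t\}$ exhibits the slice
\[A_s=\{a<s\}\cap\{(a,s)\cap K=\emptyset\}=\bigcup_{t\in\mathbb{Q},\,t<s}\big(\{a\le t\}\cap\{(a,s)\cap K=\emptyset\}\big)\]
as a countable union of closed sets, i.e. a $\Sigma^0_2$-set. Then $\Gamma_R=\bigcup_{s\in\mathbb{Q}}A_s$ is a countable union of $\Sigma^0_2$-sets and so is $\Sigma^0_2$; the same argument applies to $\Gamma_L$, and therefore $\Gamma=\Gamma_R\cup\Gamma_L$ is $\Sigma^0_2$. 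I expect the openness of $O_s$ to be the only genuine obstacle: once it is in hand, the remaining steps are routine manipulations within the Borel hierarchy.
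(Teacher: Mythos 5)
Your openness argument for $O_s$ and the hierarchy count built on it are correct, and your rational-endpoint parametrization is a perfectly good substitute for the paper's parametrization by gap length (the paper writes $\Gamma$ as $\bigcup_n(L_n\cup R_n)$, where $L_n,R_n$ demand a gap of length $2^{-n}$ to the left or right of $a$, and merely asserts these sets are closed --- your $O_s$ computation is exactly the verification it leaves implicit). However, there is a genuine error at your very first step: the decomposition $\Gamma=\Gamma_R\cup\Gamma_L$ is false, because $\Gamma_R$ and $\Gamma_L$ omit the requirement $a\in K$. In this paper, an accessible point of $K$ is a point \emph{of} $K$ that is an endpoint of a complementary interval; this is visible in the paper's own proof, where both $L_n$ and $R_n$ carry the clause ``$a\in K$''. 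Without that clause your set swallows every pair $(K,a)$ with $a\notin K$: since $K$ is closed, any such $a$ admits a rational $s>a$ with $(a,s)\cap K=\emptyset$. So what you actually prove to be $\Sigma^0_2$ is $\Gamma\cup\{(K,a)\colon a\notin K\}$, not $\Gamma$.

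This is not a pedantic distinction here. The vertical sections of your enlarged set are uncountable (they contain all of $I\setminus K$), whereas $\Gamma$ has countable vertical sections --- precisely the property exploited in Lemma~\ref{Borelimage}, where the Lusin--Novikov theorem is applied to $\Gamma$ to produce countably many Borel selectors; your set could not support that step. (The same membership requirement is what keeps $C_f$ countable via condition e) of Notation~\ref{conditions}.) Fortunately the repair is one line: the set $\{(K,a)\in\mathcal K(I)\times I\colon a\in K\}$ is closed in $\mathcal K(I)\times I$, and intersecting each of your closed pieces $\{a\le t\}\cap\{(a,s)\cap K=\emptyset\}$ with it keeps them closed, so the corrected $\Gamma_R$, $\Gamma_L$, and hence $\Gamma$ itself, are still countable unions of closed sets. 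With that fix your argument is correct and is essentially the paper's proof in different clothing.
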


\begin{proof}
The sets
\[L_n=\{(K, a)\in \mathcal K(I)\times I\colon a\in K, K\cap (a-2^{-n}, a)=\emptyset\},\]
\[R_n=\{(K, a)\in \mathcal K(I)\times I\colon a\in K, K\cap (a, a+2^{-n})=\emptyset\}\]
are closed for every $n\in\mathbb N$. Hence the set $\bigcup (L_n\cup R_n)$ is a $\Sigma^0_2$-set.
\end{proof}

\begin{notation}
For a set $B\subseteq C(I)\times I$ let us define
\begin{align*}
B^\rightarrow&=\{(f, f(x))\colon (f, x)\in B\}, \\
B^\leftarrow&=\{(f, x)\colon (f,f(x))\in B\}, \\
B^\Leftarrow&=\{(f, x)\colon x\in Acc(f^{-1}(y)), (f,y)\in B\}.
\end{align*}
\end{notation}

\begin{lemma}\label{Borelimage}
Let $B\subseteq C(I)\times I$ be a Borel set with countable vertical sections. Then the sets
$B^\rightarrow, B^\leftarrow$ and $B^\Leftarrow$
are Borel as well.
\end{lemma}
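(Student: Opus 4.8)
The plan is to handle the three operations $B^\rightarrow$, $B^\leftarrow$, and $B^\Leftarrow$ separately, in increasing order of difficulty, in each case exploiting the Lusin--Novikov selection theorem to decompose $B$ into countably many Borel graphs. By \cite[Theorem 18.10]{Kechris}, since $B\subseteq C(I)\times I$ is Borel with countable vertical sections, we may write $B=\bigcup_n \mathrm{graph}(f_n)$ where each $f_n\colon \pi(B)\to I$ is a Borel partial function defined on a Borel subset of $C(I)$ (here $\pi$ is the projection to the first coordinate), and these graphs are disjoint. The strategy throughout is to express each target set as a countable union of sets whose Borelness is manifest, using that the evaluation map $(f,x)\mapsto f(x)$ on $C(I)\times I$ is continuous.

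First I would treat $B^\rightarrow$. Writing $B=\bigcup_n\mathrm{graph}(f_n)$, we have
\[
B^\rightarrow=\bigcup_{n}\{(f,f(f_n(f)))\colon f\in\mathrm{dom}(f_n)\}.
\]
Each set in the union is the image of the Borel set $\mathrm{dom}(f_n)$ under the map $f\mapsto (f,f(f_n(f)))$, which is Borel (indeed a composition of $f_n$ with the continuous evaluation map). This map is injective on each piece, so by the Lusin--Souslin theorem \cite[Theorem 15.1]{Kechris} the image of each Borel piece is Borel, and hence so is the countable union. The set $B^\leftarrow=\{(f,x)\colon (f,f(x))\in B\}$ is even more direct: it is the preimage of $B$ under the Borel (in fact continuous) map $(f,x)\mapsto(f,f(x))$, so it is automatically Borel with no decomposition needed.

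The hard part will be $B^\Leftarrow$, since it combines the preimage operation with the accessibility predicate and a fresh countable-section requirement. The idea is to first form $B^\leftarrow$, whose vertical sections are $\{x\colon f(x)\in B_f\}=f^{-1}(B_f)$, a closed set (preimage of the closed set $\Cl(B_f)$, or more carefully the full preimage) that need not be countable. So rather than taking a closure-type operation, I would use the set $\Gamma$ of Lemma~\ref{AccPointsBorel}, which is $\Sigma^0_2$, to access the accessible points. Concretely, consider the map $\alpha\colon C(I)\times I\to \mathcal K(I)\times I$ sending $(f,x)\mapsto (f^{-1}(x),x)$; the assignment $f\mapsto f^{-1}(y)$ is Borel as a map into $\mathcal K(I)$ (the preimage of a point depends Borel-measurably on $(f,y)$, using that $\{(f,K)\colon K\subseteq f^{-1}(y)\}$-type conditions are Borel in the Vietoris topology). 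Then
\[
B^\Leftarrow=\{(f,x)\colon (f^{-1}(f(x)),x)\in\Gamma \text{ and } (f,f(x))\in B\text{ for the relevant } y\},
\]
so $B^\Leftarrow$ is essentially the intersection of the pullback of the $\Sigma^0_2$ set $\Gamma$ along a Borel map with the pullback of $B$; this exhibits it as Borel. I expect the genuine obstacle to lie in verifying that the multivalued assignment $(f,y)\mapsto f^{-1}(y)\in\mathcal K(I)$ is Borel measurable into the hyperspace and that composing it with the $\Sigma^0_2$ membership in $\Gamma$ stays within the Borel class; once that Borel-dependence of preimages is pinned down, the rest is a routine combination of preimages and countable unions, again invoking Lusin--Novikov only to guarantee the vertical sections remain countable so that the output is legitimately an input for further applications of these operations.
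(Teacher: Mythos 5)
Your proof is correct, and for two of the three operations it takes a genuinely different (and in places cleaner) route than the paper. For $B^\rightarrow$ the two arguments are essentially the same: the paper applies the countable-to-one image theorem \cite[18.14]{Kechris} directly to the continuous map $(f,x)\mapsto(f,f(x))$ restricted to $B$, while you first decompose $B$ into Borel graphs by Lusin--Novikov and then take injective Borel images via Lusin--Souslin; since \cite[18.14]{Kechris} is itself a consequence of Lusin--Novikov, this is a matter of packaging. For $B^\leftarrow$, however, the paper again decomposes $B=\bigcup_n\mathrm{graph}(F_n)$ and writes $B^\leftarrow=\bigcup_n\{(f,x)\colon f(x)=F_n(f)\}$, whereas you observe that $B^\leftarrow$ is just the preimage of $B$ under the continuous map $(f,x)\mapsto(f,f(x))$ --- simpler, and it shows the countable-sections hypothesis is not needed for this part. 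The genuine divergence is at $B^\Leftarrow$: the paper composes Lusin--Novikov selectors $b_n$ for $\Gamma$ with the Borel map $p(f,y)=f^{-1}(y)$ and takes countable-to-one images $\Psi_n(B)$, while you collapse the existential quantifier over $y$ and write $B^\Leftarrow=B^\leftarrow\cap\{(f,x)\colon(f^{-1}(f(x)),x)\in\Gamma\}$, an intersection of two Borel pullbacks requiring no image theorem at all; this also makes visible that only $B^\rightarrow$ truly uses countability of sections (forward Borel images are in general only analytic), which the paper's uniform selector-plus-image treatment obscures. Two points should be tightened. First, make explicit the fact that justifies the quantifier collapse, namely $\Acc(K)\subseteq K$ (built into $\Gamma$ in Lemma~\ref{AccPointsBorel} through the requirement $a\in K$): it gives that any witness $y$ with $x\in\Acc(f^{-1}(y))$ satisfies $y=f(x)$, so ``the relevant $y$'' is well defined and equals $f(x)$. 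Second, for the Borelness of $(f,y)\mapsto f^{-1}(y)$ you can simply quote the paper's device --- this map is upper semicontinuous, hence Borel by \cite[25.14]{Kechris} --- rather than your Vietoris sketch; and delete the incorrect aside that the sections $f^{-1}(B_f)$ of $B^\leftarrow$ are closed ($B_f$ need not be closed), which fortunately plays no role in your argument.
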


\begin{proof}
The evaluation mapping $e\colon C(I)\times I\to I$, $e(f, x)=f(x)$ is continuous. Hence the mapping $\Phi\colon (f,x)\mapsto (f, e(f,x))$ is continuous as well. Especially, the restriction of $\Phi$ to $B$ is Borel and also countable-to-1.
Since by \cite[18.14]{Kechris} countable-to-1 image of a Borel set is Borel we conclude that $\Phi(B)=B^\rightarrow$ is Borel.

By the Lusin-Novikov selection theorem we can write $B=\bigcup F_n$ for some Borel maps $F_n$. It follows then that
\[B^\leftarrow=\bigcup_{n\in\mathbb N}\{(f, x)\colon e(f,x)=F_n(f)\}\] and thus it is a Borel set.

The mapping $p\colon C(I)\times I\to \mathcal K(I)$, $p(f, y)=f^{-1}(y)$ is upper semicontinuous and hence it is Borel by \cite[25.14]{Kechris}.
The set $\Gamma$ is Borel by Lemma~\ref{AccPointsBorel} and it has nonempty and countable vertical sections. Hence $\Gamma=\bigcup b_n$ for some Borel mappings $b_n\colon \mathcal K(I)\to I$, by the Lusin-Novikov selection theorem.
The mapping $\Psi_n\colon (f,y)\mapsto (f, b_n(f^{-1}(y)))=(f, b_n(p(f, y)))$ is a Borel mapping and its restriction to $B$ is countable-to-1. Hence by \cite[18.14]{Kechris} the set $\bigcup\Psi_n(B)=B^{\Leftarrow}$ is Borel.
\end{proof}               
%

\begin{lemma}\label{BorelComposition}
Let $X, Y, Z$ be standard Borel spaces, $f\colon X\to Y$ a Borel mapping and $R\subseteq Y\times Z$ a Borel binary relation. Then the set $R\circ f=\{(x, z)\colon (f(x), z)\in R\}$ is Borel.
\end{lemma}

\begin{proof}
Define $F\colon X\times Z\to Y\times Z$ by $F(x,z)=(f(x), z)$.
Clealy $F$ is a Borel mapping and $R=F^{-1}(R)$ which is consequently a Borel set.
\end{proof}

\begin{lemma}\label{Borel}
The set 
$$
A=\{(f, x)\in C(I)\times I\colon x\in C_f\cup D_f\}
$$ 
is a Borel subset of $C(I)\times I$.
\end{lemma}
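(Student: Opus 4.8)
The plan is to realize $A$ as the outcome of the inductive construction of $C_f$ and $D_f$ carried out uniformly in $f$, checking at each step that we stay within the class of Borel sets with countable vertical sections. I identify a set $\tilde T\subseteq C(I)\times I$ with the assignment $f\mapsto T_f:=\tilde T_f$ of its sections, and I will repeatedly invoke Lemma~\ref{Borelimage}: whenever $\tilde T$ is Borel with countable vertical sections, the derived sets $\tilde T^\rightarrow,\tilde T^\leftarrow,\tilde T^\Leftarrow$, whose sections are $f(T_f)$, $f^{-1}(T_f)$ and $\bigcup_{y\in T_f}\Acc(f^{-1}(y))$, are again Borel.

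First I would verify that the seed set
\[\tilde S_1=\{(f,x):x\in S_1\},\qquad S_1=M_f\cup\{\text{values of }f\text{ at locally constant points}\}\cup\bigcup_n\Acc(\Fix(f^n)),\]
is Borel with countable vertical sections, handling each piece separately. For $M_f$ one expresses the four sharp-extremum conditions with a rational witness $\delta$: for fixed $\delta$ a non-strict one-sided condition such as $f\le f(z)$ on $[z,z+\delta]$ is closed, since $(f,z)\mapsto\max_{[z,z+\delta]}f$ is continuous, while a strict condition such as $f<f(z)$ on $(z-\delta,z)$ is $\Sigma^0_2$ via the exhaustion $(z-\delta,z)=\bigcup_n[z-\delta,z-1/n]$ and continuity of $\max_{[z-\delta,z-1/n]}f$. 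The values at locally constant points form $\bigcup_{a<b\in\mathbb Q}\{(f,y):\min_{[a,b]}f=y=\max_{[a,b]}f\}$, a countable union of closed sets. For condition c) the map $f\mapsto\Fix(f^n)\in\mathcal K(I)$ is Borel (it is nonempty by the intermediate value theorem, and $\{f:\Fix(f^n)\cap(a,b)\neq\emptyset\}$ is Borel because $0$ lies in the range of $f^n-\mathrm{id}$ over a compact interval exactly when its minimum is $\le 0\le$ its maximum), so by Lemma~\ref{AccPointsBorel} the set $\{(f,x):(\Fix(f^n),x)\in\Gamma\}$ is Borel, and the union over $n$ is Borel. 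All these sections are countable, as recorded around Notation~\ref{conditions}.

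Next I would run the recursion of the countability proof at the level of graphs. Putting $\tilde S_{i+1}=\tilde S_i\cup\tilde S_i^\rightarrow\cup\tilde S_i^\Leftarrow$, Lemma~\ref{Borelimage} and induction show that each $\tilde S_i$ is Borel with countable sections, hence $\tilde C:=\bigcup_i\tilde S_i=\{(f,x):x\in C_f\}$ is Borel with countable sections. Proposition~\ref{borelclosure} then makes $\bigcup_f\{f\}\times\Cl(C_f)$ Borel, so the set $\tilde E:=\big(C(I)\times(\mathbb Q\cap I)\big)\setminus\bigcup_f\{f\}\times\Cl(C_f)$, whose sections are exactly $E_f=\mathbb Q\cap I\setminus\Cl(C_f)$, is Borel with countable sections.

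Finally I would generate $D_f$ as the full $\mathbb Z$-orbit of $E_f$. The forward part $\bigcup_{n\ge 0}\tilde E^{\rightarrow n}$ poses no difficulty, since images of countable sets are countable. For the backward part $\bigcup_{m\ge 0}\tilde E^{\leftarrow m}$ the decisive point — and the one place where the structure theory is indispensable — is that the iterated preimages $f^{-m}(E_f)$ remain countable, so that Lemma~\ref{Borelimage} may legitimately be applied at each stage. I would derive this from Lemma~\ref{complementaryintervals} together with the invariance $f(\Cl(C_f))\subseteq\Cl(C_f)$: the latter forces every backward orbit of a point of $E_f$ to stay inside $I\setminus\Cl(C_f)=\bigcup\mathcal J_f$, while the former forces $f^{-1}$ to meet each complementary interval in at most one point, so $f^{-1}(y)$ is finite for $y\in E_f$ and hence $f^{-m}(E_f)$ is countable. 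Thus $\tilde D:=\bigcup_{n\ge 0}\tilde E^{\rightarrow n}\cup\bigcup_{m\ge 0}\tilde E^{\leftarrow m}$ is Borel, and $A=\tilde C\cup\tilde D$ is Borel. The main obstacle is precisely this last countability bookkeeping for the backward iteration; the seed computation for $M_f$ is routine but must be handled carefully because of the strict inequalities in the definition of sharp local extrema.
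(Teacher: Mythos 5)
Your construction of the $C_f$-part is essentially identical to the paper's: the same three seed sets (with $\Acc(\Fix(f^n))$ handled, as in the paper, by composing the Borel map $f\mapsto\Fix(f^n)$ with the set $\Gamma$ of Lemma~\ref{AccPointsBorel}), the same recursion $\tilde S_{i+1}=\tilde S_i\cup\tilde S_i^{\rightarrow}\cup\tilde S_i^{\Leftarrow}$ justified by Lemma~\ref{Borelimage}, and the same passage to $E_f$ via Proposition~\ref{borelclosure}. Two harmless slips there: the strict one-sided conditions defining sharp extrema are $\Pi^0_2$ (countable intersections of open conditions), not $\Sigma^0_2$; and finiteness of $f^{-1}(y)$ for $y\in E_f$ needs the finite-union clause of Lemma~\ref{complementaryintervals}, since injectivity on each complementary interval alone only gives countability. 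Neither matters, as only ``Borel with countable vertical sections'' is ever used.

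Where you genuinely diverge is the generation of $D_f$, and there your version is in fact the correct one, whereas the paper's step has a gap. The paper iterates $A_{n+1}=A_n\cup A_n^{\rightarrow}\cup A_n^{\leftarrow}$ for $n\geq 3$ starting from the sections $E_f$; this interleaving creates mixed terms such as $f^{-1}(f(E_f))$, which are not pure powers. Already for the constant map $f\equiv\tfrac12$ one has $C_f=\{0,\tfrac12,1\}$, $f(E_f)=\{\tfrac12\}$ and $f^{-1}(f(E_f))=I$, so the section of $A_5$ is all of $I$: this (i) destroys the countable-sections hypothesis needed to apply Lemma~\ref{Borelimage} at the next stage, and (ii) makes $A_1\cup\bigcup_{n\geq 3}A_n$ strictly larger than $A$, whose section here is only $\mathbb Q\cap I$ --- so the paper's recursion does not compute the set in the statement. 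Your separated unions avoid both defects: the sections of $\bigcup_{n\geq 0}\tilde E^{\rightarrow n}\cup\bigcup_{m\geq 0}\tilde E^{\leftarrow m}$ are exactly $\bigcup_{n\in\Z}f^n(E_f)=D_f$, and your bookkeeping for the backward iterates (backward orbits of points of $E_f$ avoid $\Cl(C_f)$, and preimages of such points are finite by Lemma~\ref{complementaryintervals}) is precisely what keeps the hypothesis of Lemma~\ref{Borelimage} valid at every stage. So your route is not merely a faithful rendering of the definition of $D_f$; it is the repair the paper's own proof needs.
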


\begin{proof}
Let us prove first that the set $B_a:=\{(f,x)\colon x\in M_f\}$ is Borel.
As the set $\{(f, x)\colon x \text{ is a left sharp local maximum}\}$ can be written in the form
\[\bigcup_{\varepsilon>0}\bigcap_{\eta>0}\bigcup_{\delta>0}\{(f,x)\colon \forall z\in [x-\varepsilon, x-\eta]: f(z)\leq f(x)-\delta \And \forall z\in [x,x+\varepsilon]: f(z)\leq f(x)\}\]
it follows that it is a $\Sigma^0_3$ set. By symmetry it follows that $B_a$ is the union of four $\Sigma^0_3$-sets and thus it is Borel.
The set $B_b:=\{(f, y)\colon f^{-1}(y) \text{ contains an interval} \}$ is a $\Sigma^0_2$-set. 
Let $B_c:=\{(f, x)\in C(I)\times I\colon x\in\Acc(\Fix(f^n)), n\in\N\}$.
The mapping $F_n\colon C(I)\to \mathcal K(I)$, $F_n(f)=\Fix(f^n)$ is upper semicontinuous
(since if $f_i$ converge uniformly to $f$ and $x_i$ converge to $x$ with $f_i^n(x_i)=x_i$ then $f^n(x)=\lim_i f_i^n(\lim_j x_j)=\lim_j\lim_i f_i^n(x_j)=\lim_j f^n(x_j)=f^n(x)$ by the Moore-Osgood theorem) and thus it is Borel. Since $\Gamma$ is a Borel set by Lemma~\ref{AccPointsBorel} we conclude that the composition $\Gamma\circ F_n$ is Borel because the composition of a Borel binary relation and a Borel mapping (in that order) is a Borel relation by Lemma \ref{BorelComposition}. Hence $B_c=\bigcup_{n\in\mathbb N} \Gamma\circ F_n$ is Borel.
Hence the set $B=B_a\cup B_b\cup B_c$ is Borel.
Define recursively $B_1=B$, $B_{n+1}=B_n \cup B_n^\rightarrow \cup B_n^\Leftarrow$ for $n\in\mathbb N$. All these sets are Borel by Lemma~\ref{Borelimage}.
It follows that $A_1=\bigcup B_n=\{(f, x)\colon x\in C_f\}$ is Borel.

Since $A_1$ has countable vertical sections and it is Borel we conclude using Proposition~\ref{borelclosure} that $A_2=\bigcup_{f\in C(I)}(\{f\}\times \Cl(A_{1,f})) $ is Borel as well. Consequently $A_3=\{(f, x)\colon x\in E_f\}= (C(I)\times \mathbb Q)\setminus A_2$ is Borel. By Lemma~\ref{Borelimage} we conclude that all the sets $A_{n+1}=A_n\cup A_n^\rightarrow\cup A_n^\leftarrow$, $n\geq 3$ are Borel.
Finally $A=A_1\cup \bigcup_{n\geq 3} A_n$ is a Borel set.
\end{proof}

\begin{theorem}\label{bireducible}
The orientation preserving conjugacy of interval dynamical systems is Borel bireducible to the $S_\infty$-universal orbit equivalence relation.
\end{theorem}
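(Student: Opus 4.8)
The plan is to establish Borel bireducibility by proving the two reductions separately. For the direction $E_{\text{conj}} \leq_B E_{S_\infty}$, I would combine the two halves already developed in the excerpt. Theorem~\ref{conjugacy} shows that $\Psi$ is a reduction of orientation-preserving conjugacy to isomorphism of countable structures (which is Borel bireducible to $E_{S_\infty}$ since countable graphs code $E_{S_\infty}$), and Lemma~\ref{Borel} shows that the underlying support set $A = \{(f,x) : x \in C_f \cup D_f\}$ is Borel. What remains is to verify that $\Psi$ can be realized as a genuinely Borel map into the standard Borel space of countable $\mathcal L$-structures on $\N$, where $\mathcal L$ has one binary order relation and one binary (function-graph) relation. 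The issue, flagged in the ``Borel coding'' subsection, is that the support $C_f \cup D_f$ varies with $f$, whereas structures on $\N$ require a fixed underlying set. I would invoke the Lusin--Novikov theorem once more: since $A$ is Borel with countable vertical sections, write $A = \bigcup_n g_n$ for Borel functions $g_n : C(I) \to I$ enumerating $(C_f \cup D_f)$ for each $f$. After passing to a Borel selection of a one-to-one enumeration (deduplicating the $g_n$ on each fibre, again Borel by Lusin--Novikov), transport the relations $\leq\restriction_{C_f \cup D_f}$ and $f\restriction_{C_f \cup D_f}$ back along this enumeration to obtain relations on $\N$. Both pullbacks are Borel in $f$ because the order and the graph of $f$ are closed (hence Borel) conditions evaluated at the Borel points $g_n(f)$, and the evaluation map $e(f,x) = f(x)$ is continuous. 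This yields a Borel map $f \mapsto \widehat\Psi(f)$ into structures on $\N$ that is a reduction, completing this direction.

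For the reverse direction $E_{S_\infty} \leq_B E_{\text{conj}}$, I would appeal to the cited result of Hjorth \cite[Section 4.2]{Hjorth}, which already establishes that conjugacy of homeomorphisms of $[0,1]$ is Borel bireducible to $E_{S_\infty}$; in particular $E_{S_\infty} \leq_B$ (conjugacy of interval homeomorphisms). Since increasing homeomorphisms of $I$ form a Borel subset of $C(I)$ closed under orientation-preserving conjugacy, and conjugacy of such homeomorphisms coincides with their orientation-preserving conjugacy as interval dynamical systems, the inclusion of homeomorphisms into $C(I)$ provides a Borel reduction witnessing $E_{S_\infty} \leq_B E_{\text{conj}}$. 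Alternatively, one can give a direct reduction from isomorphism of countable linear orders (or graphs) by encoding a countable structure into the arrangement of fixed points and the local dynamics of a carefully built interval map, but leaning on Hjorth is cleaner.

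The main obstacle is the first direction's Borel coding step: one must ensure that the \emph{fibrewise} enumeration of $C_f \cup D_f$ can be chosen Borel-uniformly in $f$ and then that the induced isomorphism type on $\N$ depends on $f$ only up to the conjugacy class, i.e. that $\widehat\Psi$ genuinely descends to a reduction. The descent itself is automatic from Theorem~\ref{conjugacy} (isomorphism of structures is independent of the chosen enumeration), so the real work is purely the measurability bookkeeping. Here the groundwork in Lemma~\ref{Borelimage} and Proposition~\ref{borelclosure} does the heavy lifting, guaranteeing that each operation used to build $A$ (forward images $B^\rightarrow$, preimages $B^\leftarrow$, accessible preimages $B^\Leftarrow$, and closures) preserves Borelness. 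I would therefore spend the bulk of the argument carefully invoking Lusin--Novikov to convert the Borel set $A$ into a Borel assignment of structures on the fixed set $\N$, and then state that combining this with Theorem~\ref{conjugacy} and Hjorth's result yields the claimed bireducibility.

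\begin{proof}
By Theorem~\ref{conjugacy} the map $\Psi$ reduces orientation-preserving conjugacy to isomorphism of countable structures in the language $\mathcal L$ consisting of one binary order relation and one binary function-graph relation. By Lemma~\ref{Borel} the set $A = \{(f,x) \in C(I)\times I : x \in C_f \cup D_f\}$ is Borel with countable vertical sections, so by the Lusin--Novikov selection theorem \cite[Theorem 18.10]{Kechris} we may write $A = \bigcup_n g_n$ for Borel functions $g_n$, and after a further Borel deduplication obtain, for each $f$, a bijective enumeration of $C_f \cup D_f$ by a (possibly co-finite) subset of $\N$. Transporting $\leq\restriction_{C_f \cup D_f}$ and the graph of $f\restriction_{C_f \cup D_f}$ along this enumeration gives relations on $\N$; these pullbacks are Borel in $f$ since they are evaluated at the Borel points $g_n(f)$ using the continuous evaluation $e(f,x)=f(x)$. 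This produces a Borel map $f \mapsto \widehat\Psi(f)$ into the standard Borel space of $\mathcal L$-structures on $\N$. Because isomorphism type is independent of the chosen enumeration, $\widehat\Psi$ is a Borel reduction of orientation-preserving conjugacy to isomorphism of countable structures, and since the latter is Borel bireducible to $E_{S_\infty}$ we obtain one reduction.

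For the converse, Hjorth proved \cite[Section 4.2]{Hjorth} that conjugacy of homeomorphisms of $I$ is Borel bireducible to $E_{S_\infty}$; in particular $E_{S_\infty}$ Borel reduces to conjugacy of increasing interval homeomorphisms. Since such homeomorphisms form a Borel subset of $C(I)$ on which orientation-preserving conjugacy as interval dynamical systems coincides with topological conjugacy, the inclusion into $C(I)$ witnesses $E_{S_\infty} \leq_B$ (orientation-preserving conjugacy of interval maps). Combining the two directions yields the claimed Borel bireducibility.
\end{proof}
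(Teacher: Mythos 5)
Your overall route is the same as the paper's: for the forward direction you combine Theorem~\ref{conjugacy}, Lemma~\ref{Borel} and the Lusin--Novikov theorem to turn $\Psi$ into a Borel assignment of structures on the fixed set $\N$, and for the reverse direction you lean on Hjorth. However, two steps need repair. The lesser one is in the forward direction: enumerating $C_f\cup D_f$ by ``a (possibly co-finite) subset of $\N$'' is not good enough. If the set of indices actually used has complement of size $3$ for $f$ and of size $5$ for $g$, the transported structures on $\N$ are \emph{not} isomorphic even when $\Psi(f)\cong\Psi(g)$: the unused indices are relation-free points (they fail $aRa$, while every used index satisfies it by reflexivity of $\leq$), so their number is an isomorphism invariant of the $\N$-structure. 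Since every section $C_f\cup D_f$ is infinite, one must --- and can, Borel-measurably --- re-index so that each $n\in\N$ is used exactly once; this is exactly what the paper arranges when it assumes that for every pair $(f,x)\in A$ there is precisely one $n$ with $F_n(f)=x$. This is a fixable bookkeeping slip, not a wrong idea.

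The more serious problem is your bridge in the reverse direction: it is false that conjugacy and orientation-preserving conjugacy coincide for increasing homeomorphisms. Take $f(x)=x^2$ and $g(x)=2x-x^2$. They are conjugate via the decreasing homeomorphism $x\mapsto 1-x$, but no increasing conjugacy between them exists: an increasing conjugacy fixes $0$ and $1$ and preserves the sign of $f(x)-x$ on $(0,1)$, which is negative for $f$ and positive for $g$. Hence you cannot pass from ``$E_{S_\infty}\leq_B$ conjugacy of interval homeomorphisms'' to ``$E_{S_\infty}\leq_B$ orientation-preserving conjugacy of interval maps'' by claiming the two relations agree on the class of increasing homeomorphisms. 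The paper avoids this by reading Hjorth's Section~4.2 result as a statement already about orientation-preserving conjugacy (note that for Theorem~\ref{bireducible2}, where full conjugacy is needed, the paper cites a different source, namely \cite[Exercise 4.14]{Hjorth}). Your desired conclusion is recoverable by invoking Hjorth in that form, but the justification as you wrote it does not work.
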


\begin{proof}
By the result of \cite[Section 4.2]{Hjorth} orientation preserving conjugacy of increasing interval homeomorphisms is Borel bireducible to the $S_\infty$-universal orbit equivalence relation. Hence especially the $S_\infty$-universal orbit equivalence relation is Borel reducible to increasing conjugacy of orientation preserving interval dynamical systems.

Let us argue for the converse.
The set $A$ from Lemma~\ref{Borel} is Borel and it has nonempty and countable vertical sections. Hence by the Lusin-Novikov selection theorem we can find Borel mappings $F_n\colon C(I)\to I$ such that $\bigcup F_n=A$.
Since all the vertical sections are infinite we can additionally suppose that for every pair $(f,x)\in A$ there is exactly one $n\in\mathbb N$ 
satisfying $F_n(f)=x$. Let
\[\Phi(f)=(\mathbb N, R, m),\] 
where $R$ is a binary relation and $m$ is a unary function
such that $aRb$ iff $F_a(f)\leq F_b(f)$ and $m(a)=b$ iff $f(F_a(f))=F_b(f)$ for $a, b\in\mathbb N$. There is a natural isomorphism $\Phi(f)\to\Psi(f)$, $a\mapsto F_a(f)$. 
Hence clearly $\Phi$ is a reduction.
It is routine to verify that $\Phi$ is Borel by the fact that the mappings $F_n$ are Borel.
\end{proof}

Let us note that the same conclusion as in the previous theorem can be proved without assuming orientation preserving conjugacy but with just conjugacy. The reason is that in the proofs of Theorem \ref{bireducible} and Theorem \ref{conjugacy} we can simply consider a ternary betweenness relation $T$ instead of the binary relation of linear order $\leq$, i.e. $(x,y,z)\in T$ if and only if $y$ is an element of the smallest interval containing $x$ and $z$. This ternary relation is clearly forgetting the order of $I$. Also by \cite[Exercise 4.14]{Hjorth} $E_{S_\infty}$ is Borel reducible to conjugacy of interval homeomorphisms. Thus we get the following result.

\begin{theorem}\label{bireducible2}
The conjugacy of interval dynamical systems is Borel bireducible to the $S_\infty$-universal orbit equivalence relation.
\end{theorem}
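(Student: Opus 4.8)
The plan is to establish the two Borel reductions separately, recycling the orientation-preserving machinery of Theorem~\ref{conjugacy} and Theorem~\ref{bireducible} with the single change that the linear order is replaced everywhere by the ternary betweenness relation $T$ on $I$, defined by $(x,y,z)\in T$ iff $y$ lies in the closed interval with endpoints $x$ and $z$. Concretely, I would set $\Psi'(f)=(C_f\cup D_f, T\restriction_{C_f\cup D_f}, f\restriction_{C_f\cup D_f})$ and, on the coded side, $\Phi'(f)=(\mathbb N, S, m)$ where $(a,b,c)\in S$ iff $(F_a(f),F_b(f),F_c(f))\in T$ and $m$ is the unary function coding $f$ exactly as in Theorem~\ref{bireducible}. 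The reason for using $T$ in place of $\leq$ is that betweenness is preserved by \emph{every} self-homeomorphism of $I$, the orientation-reversing ones included, whereas $\leq$ is respected only by the increasing homeomorphisms; thus $\Psi'$ forgets the orientation of $I$ and can serve as a reduction for full conjugacy.

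To check that $\Psi'$ is a reduction I would follow both directions of the proof of Theorem~\ref{conjugacy}. The direction ``isomorphism implies conjugacy'' transcribes verbatim: a betweenness-preserving bijection $\varphi$ of $C_f\cup D_f$ onto $C_g\cup D_g$ is either order preserving or order reversing, in either case it extends to a monotone homeomorphism $\tilde\varphi$ of $I$, and the same density-plus-continuity computation shows that $\tilde\varphi$ conjugates $f$ and $g$ (now possibly reversing orientation, which is exactly what we want). The converse direction requires one extra maneuver: a conjugating homeomorphism $h$ may be decreasing, so on a complementary interval $J$ with $f^n(J)=J$ the piece $\bar h\restriction_J$ must be a \emph{decreasing} conjugacy of $f^n\restriction_{\Cl(J)}$ and $g^n\restriction_{\Cl(h(J))}$, while Lemma~\ref{folk} only yields increasing ones. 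I would absorb this by precomposing with the reflection $\rho$ of $\Cl(h(J))$: apply Lemma~\ref{folk} to $f^n\restriction_{\Cl(J)}$ and $\rho g^n\rho\restriction_{\Cl(h(J))}$ (which is again an increasing homeomorphism with only the endpoints fixed, by Notation~\ref{conditions}~c)) to obtain an increasing conjugacy $k$ sending $D_f\cap J$ onto $\rho(D_g\cap h(J))$, and then take $\bar h\restriction_J=\rho\circ k$. This composite is decreasing, satisfies $(\rho k)f^n(\rho k)^{-1}=g^n$, and sends $D_f\cap J$ onto $D_g\cap h(J)$; on cycle-free components one simply picks an arbitrary decreasing homeomorphism matching the dense sets.

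The Borel coding needs essentially no new work. The set $A$ of Lemma~\ref{Borel} is defined purely through the invariant set $C_f\cup D_f$ and never mentions the order, so it and the Lusin--Novikov selectors $F_n$ are unchanged; since $T$ is closed in $I^3$, the ternary relation $S$ of $\Phi'(f)$ is obtained from the Borel maps $F_n$ by composition, so $\Phi'$ is Borel by the same routine argument as before. For the reverse reduction I would merely invoke \cite[Exercise~4.14]{Hjorth}, which already gives $E_{S_\infty}\b$ conjugacy of interval homeomorphisms (not just orientation-preserving conjugacy), and compose it with the inclusion of $\mathrm{Homeo}(I)$ into $C(I)$; this inclusion is a continuous reduction of conjugacy of homeomorphisms into conjugacy of selfmaps, since a conjugating map between homeomorphisms is itself a homeomorphism in either ambient space.

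The step I expect to be the main obstacle is the orientation-reversing case of the converse just sketched: one must verify that the reflection trick is compatible simultaneously across \emph{all} complementary intervals and with the prescribed behaviour $\bar h=h$ on $\Cl(C_f)$, so that the local pieces glue into one globally decreasing homeomorphism of $I$ that still commutes with $f$ and $g$. The key point here is that choosing $\bar h\restriction_J$ to match the orientation of $h\restriction_J$ on a component representative $J$ forces the same matching on every $K$ in that component, because $\bar h$ is propagated by the formula $(g^{-n}\restriction_K)g^m\bar h(f^{-m}\restriction_J)f^n$ and $h$ obeys the identical orientation bookkeeping via $hf^n=g^nh$. Everything else is a transcription of the orientation-preserving argument, the only conceptual change being the passage from $\leq$ to the orientation-blind relation $T$.
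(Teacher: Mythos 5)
Your proposal is correct and is essentially the paper's own proof: the paper likewise obtains Theorem~\ref{bireducible2} by replacing the order $\leq$ with the ternary betweenness relation $T$ in the proofs of Theorems~\ref{conjugacy} and~\ref{bireducible}, and invokes \cite[Exercise~4.14]{Hjorth} for the reduction of $E_{S_\infty}$ to conjugacy of interval homeomorphisms. The only difference is one of detail: the paper's argument is a brief remark, whereas you explicitly carry out the orientation-reversing bookkeeping (the reflection trick applied to Lemma~\ref{folk} and its propagation along components), which the paper leaves implicit.
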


We note that Theorem~\ref{bireducible2} is a special case of 
Hjorth's conjecture \cite[Conjecture 10.6]{Hjorth} stating that every equivalence relation induced by a continuous 
action of the group $\mathcal H(I)$ of all interval homeomorphisms on a Polish space is classifiable by countable structures. In this case the homeomorphism group acts on the space of continuous selfmaps by conjugacy. Similarly one can prove that the orbit equivalence relations induced by natural left or right composition actions of the homeomorphism group on the space of continuous selfmaps is Borel reducible to the $S_\infty$-universal equivalence relation.
Also it is known that the orbit equivalence induced by the homeomorphism group action $\mathcal H(I)$ on the hyperspace $\mathcal K(I)$ is Borel bireducible to the $S_\infty$-universal orbit equivalence relation (see \cite[Exercise~4.13]{Hjorth} or \cite{ChangGao2019} for a proof). All these are special cases of Hjorth's conjecture.

\section{Hilbert cube dynamical systems}
Since the homeomorphism equivalence relation of metrizable compacta is known to be Borel bireducible to the universal orbit equivalence relation,
it is not surprising that conjugacy of dynamical systems on the Hilbert cube is of the same complexity, which is the main result of this section.
In a dynamical system $(X, f)$, a point $x$ is called a \emph{locally attracting fixed point} if $f(x)=x$ and there is a neighborhood $U$ of $x$ such that for every $z\in U$ the trajectory $(f^n(z))_{n\in\mathbb N}$ converges to $x$.
The notion of a $Z$-set in the Hilbert cube $Q$ plays an important role and it describes a kind of relative homotopical smallness. 

\begin{definition}
A closed subset of a (separable metric) space $X$ is called a \emph{$Z$-set} in $X$ if for every open cover $\mathcal U$ of $X$ and every continuous function $f$ of the Hilbert cube $Q$ into $X$ there is a continuous function $g\colon Q\to X$ such that $f$ and $g$ are $\mathcal U$-close (i.e. for every $x\in X$ there is $U\in\mathcal U$ such that $f(x), g(x)\in U$) and $g(Q)\cap A=\emptyset$.
\end{definition}
An introduction to this notion can be found in \cite[Chapter 5]{vanMill}.
Mostly, we will need the following properties on $Z$-sets in the Hilbert cube. First, every homeomorphism of $Z$-sets can be extended to a homeomorphism of the Hilbert cube \cite[Theorem 5.3.7]{vanMill}. Second, the Hilbert cube $Q\times I$ contains a topological copy of itself $Q\times \{0\}$ as a $Z$-set \cite[Lemma 5.1.3]{vanMill} and similarly the base in the cone of the Hilbert cube is a Z-set. Third, every closed subset of a $Z$-set in $Q$ is a $Z$-set in $Q$ \cite[Lemma 5.1.2]{vanMill}.
If follows from the first and second property that there is topologically just one way, how to embed the Hilbert cube into itself as a Z-set (namely $Q\times\{0\}$ included in $Q\times I$).
For the purpose of this paper, an \emph{absolute retract} is just a space homeomorphic to a retract of the Hilbert cube (which is equivalent to being a retract of every separable metric space, in which it is embedded).
A space $X$ is said to have the \emph{disjoint cell property} if for every $\varepsilon>0$, $n\in\mathbb N$ and continuous mappings $f, g:I^n\to X$ there are continuous mappings $f', g': I^n\to X$ with disjoint images such that $f$ and $f'$ as well as $g$ and $g'$ are $\varepsilon$-close. The last two notions give a topological characterization of the Hilbert cube.

\begin{theorem}[{Toru\'nczyk, see e.g. \cite[Theorem 4.2.25]{vanMill}}]
A space $X$ is homeomorphic to the Hilbert cube if and only if it is an absolute retract with the disjoint cell property.
\end{theorem}

The following proposition is a special case of \cite[Proposition~1]{Zielinski} and it can be easily proved using the back and forth argument. Another reference for the proof is \cite[Proposition 9]{Lorch}.
Our formulation is using a slightly different language. 

\begin{proposition}\label{extension}
Let $K\subseteq A, L\subseteq B$ be four nonempty compact metrizable spaces such that $A\setminus K$ and $B\setminus L$ are dense sets of isolated points in $A$ and $B$ respectively. Then every homeomorphism of $K$ onto $L$ can be extended to a homeomorphism of $A$ onto $B$.
\end{proposition}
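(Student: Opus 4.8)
The plan is to fix compatible metrics $d$ on $A$ and $\rho$ on $B$, each bounded by $1$, and to obtain the extension by matching the isolated points $A\setminus K$ with the isolated points $B\setminus L$ in a way that is compatible with $h$ on the cores. First I would record three elementary facts. Since $A$ is separable and each point of $A\setminus K$ is isolated, the singletons $\{a\}$ with $a\in A\setminus K$ form a disjoint family of open sets, so $A\setminus K$ is countable, and likewise $B\setminus L$. Moreover, for every $\delta>0$ the set $\{a\in A\setminus K:\ d(a,K)\ge\delta\}$ is a closed, hence compact, discrete subset of $A$, and therefore finite; thus the isolated points accumulate only on $K$, and by density they accumulate on all of $K$. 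Finally, no point of $L$ is isolated in $B$ (an isolated point of $L$ would be an open set missing the dense set $B\setminus L$), so every point of $L$ is a limit of points of $B\setminus L$, and symmetrically for $K$ in $A$; in particular every open ball centred at a point of $L$ contains infinitely many points of $B\setminus L$, and dually for $K$.

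Next I would reduce the problem. As $h$ is a homeomorphism between compacta it is uniformly continuous in both directions. It suffices to produce a bijection $\phi\colon A\setminus K\to B\setminus L$ for which the common extension $\tilde h=h\cup\phi\colon A\to B$ is continuous: being then a continuous bijection from a compact space onto a Hausdorff space, $\tilde h$ is automatically a homeomorphism. Writing $p_a\in K$ for a nearest point of $K$ to an isolated point $a$, continuity of $\tilde h$ is automatic at the (open) isolated points, and at the points of $K$ it follows, using uniform continuity of $h$, from the single requirement that $\rho(\phi(a),h(p_a))\to 0$ as $d(a,K)\to 0$; that is, every isolated point sufficiently close to $K$ must be sent close to $h$ of its nearest $K$-point.

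Then I would carry out a back-and-forth construction of $\phi$. Enumerating $A\setminus K=\{a_i\}$ and $B\setminus L=\{b_j\}$, I build an increasing chain of finite partial bijections. At a forth step I take the least unused $a_i$, choose a nearest point $p=p_{a_i}\in K$, and, using that $h(p)\in L$ is a limit of infinitely many isolated points of $B$, set $\phi(a_i)$ to be a so-far-unused point of $B\setminus L$ lying within distance $d(a_i,K)$ of $h(p)$. A back step is symmetric: for the least not-yet-hit $b_j$, with nearest point $q\in L$, I assign as its preimage an unused isolated point of $A$ lying within distance $\rho(b_j,L)$ of $h^{-1}(q)$. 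This yields a bijection in which every forth pair obeys $\rho(\phi(a),h(p_a))<d(a,K)$ and every back pair the dual estimate $d(\phi^{-1}(b),h^{-1}(q))<\rho(b,L)$.

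The main obstacle, and the only place where care is needed, is the treatment of the isolated points lying far from the cores. When $b_j$ is close to $L$ the back step places its preimage close to $K$, and then $\phi(a)=b_j$ automatically lies close to $h(p_a)$, so the forward modulus is respected up to the uniform continuity of $h$; the analogous remark applies to the forth pairs. The genuinely uncontrolled pairs are those for which one of the two points is at distance at least some fixed $\epsilon_0$ from its core. By the compactness fact noted at the outset there are only finitely many such points in $A\setminus K$ and in $B\setminus L$, and since these two finite sets may have different cardinalities a finite number of far-to-near matches violating the modulus is unavoidable. The key observation is that this is harmless: the set of isolated points at which either estimate fails is finite, hence has no accumulation point on $K$ or on $L$, whereas continuity of $\tilde h$ at a point $x\in K$, and of $\tilde h^{-1}$ at a point of $L$, is a statement about isolated points in arbitrarily small neighbourhoods, from which each fixed exceptional point is eventually excluded. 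Therefore $\tilde h$ is a continuous bijection, hence a homeomorphism, and by construction it extends $h$.
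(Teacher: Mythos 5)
Your proof is correct and follows exactly the route the paper indicates: the paper does not prove Proposition~\ref{extension} itself but presents it as a special case of Zielinski's Proposition~1, remarking only that it ``can be proved using the back and forth argument,'' and your back-and-forth construction with metric control (matching each new isolated point to an unused isolated point near the $h$-image of its nearest core point) supplies precisely the details that are omitted. The one loose phrase is ``the set of isolated points at which either estimate fails is finite'' --- strictly, for each fixed $\epsilon_0>0$ the set of pairs violating the forward modulus at level $\epsilon_0$ is finite, and these finite sets may grow as $\epsilon_0\to 0$ --- but since continuity at a point of $K$ is checked one $\epsilon_0$ at a time, with each finite exceptional set eventually excluded from small neighbourhoods, this is exactly how your argument uses it and the proof stands.
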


The following will be useful in the proof of Theorem~\ref{Hilbert}.

\begin{proposition}[{\cite[Theorem~2.6]{GladdinesvanMill} or \cite[Corollary 4.2.24]{vanMill}}]\label{homotopy}
If $X$ is a nondegenerate Peano continuum then there exists a homotopy $H\colon  \mathcal K(X)\times I\to \mathcal K(X)$ for which
\begin{itemize}[noitemsep]
\item $H(A, 0)=A$ for every $A\in 2^X$,
\item $H(A, t)$ is finite for every $t>0$ and $A\in 2^X$.
\end{itemize}
\end{proposition}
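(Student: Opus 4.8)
The plan is to read the statement as the assertion that the finite subsets form a \emph{homotopy dense} family in $\mathcal K(X)$: the identity map of $\mathcal K(X)$ is to be deformed, instantly and continuously, into maps whose values are finite sets. The whole construction rests on one feature of Peano continua that fails for general continua, namely local arcwise connectedness together with the existence of a convex (geodesic) metric $d$ on $X$ (Bing--Moise): any two sufficiently close points are joined by a short arc that can be traversed continuously, so finite subsets of $X$ can be moved, merged, and split along short arcs without leaving a prescribed small neighbourhood. I would fix such a metric once and for all and work with the induced Hausdorff metric $d_H$ on $\mathcal K(X)$.

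First I would produce, for each $n\in\N$, a continuous map $\varphi_n\colon \mathcal K(X)\to\mathcal K(X)$ whose values are finite sets and which satisfies $d_H(\varphi_n(A),A)\le 2^{-n}$ for all $A$. To build $\varphi_n$, choose a finite cover $\{U_1,\dots,U_{k}\}$ of $X$ by connected open sets of $d$-diameter $<2^{-n}$, a subordinate partition of unity $\{\lambda_i\}$, and representative points $p_i\in U_i$. The numbers $\mu_i(A)=\max_{a\in A}\lambda_i(a)$ depend continuously on $A$ and detect which cells $A$ meets; the finite set $\varphi_n(A)$ is assembled from the points $p_i$ with $\mu_i(A)>0$, where continuity as a cell is entered or vacated is arranged by letting the representative $p_i$ emerge from (resp. merge into) a representative of an overlapping cell along a short arc, the arc being parametrised by $\mu_i(A)$. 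Because the cover is connected and has mesh $<2^{-n}$, every point of $A$ lies within $2^{-n}$ of some retained $p_i$ and conversely, which gives the Hausdorff estimate.

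Next I would connect consecutive approximations. For each $n$ I construct a homotopy $H_n\colon\mathcal K(X)\times[0,1]\to\mathcal K(X)$, through finite sets, connecting $\varphi_{n+1}$ and $\varphi_n$, again moving each representative point along a short arc to the nearby representatives of the other level and allowing merges; since both levels lie within $2^{-n}$ of $A$, every set occurring in $H_n$ is within $2^{-n}+2^{-(n+1)}$ of $A$ in $d_H$. Reparametrising $H_n$ onto the interval $[2^{-(n+1)},2^{-n}]$, so that its endpoint values are $\varphi_{n+1}$ at $2^{-(n+1)}$ and $\varphi_n$ at $2^{-n}$, and noting that successive intervals share the common value $\varphi_n$ at $t=2^{-n}$, yields a single map $H\colon\mathcal K(X)\times(0,1]\to\mathcal K(X)$ with finite values. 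Finally I set $H(A,0)=A$.

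Continuity on $\mathcal K(X)\times(0,1]$ follows from the continuity of the individual $H_n$ and their agreement at the shared endpoints. Continuity at the slice $t=0$ is where the size control pays off: for $t\in[2^{-(n+1)},2^{-n}]$ every value $H(A,t)$ is within $2^{-n}+2^{-(n+1)}$ of $A$, so $H(A,t)\to A$ uniformly as $t\to 0^{+}$, matching the prescribed value. The two required conditions then hold: $H(A,0)=A$ by definition, and $H(A,t)$ is finite for every $t>0$ since it lies in the range of some $H_n$. The main obstacle---and the only genuinely delicate point---is the first step: manufacturing honestly continuous finite-valued approximations whose cardinality is allowed to change, which forces every appearance or disappearance of a representative point to be realised by splitting off from, or colliding with, an existing point along a short arc. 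This mechanism, and hence the whole proposition, is exactly what the local arcwise connectedness of a Peano continuum supplies and what breaks down for continua that are not locally connected.
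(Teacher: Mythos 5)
You should first be aware that the paper itself offers no proof of this proposition: it is imported verbatim as \cite[Theorem~2.6]{GladdinesvanMill}, a result whose published proofs rest on the Curtis--Schori machinery for hyperspaces of Peano continua. So your attempt can only be measured against the literature, not against an argument in the paper. Your global architecture is the standard and correct skeleton for such statements: continuous finite-valued maps $\varphi_n$ that are $2^{-n}$-close to the identity, connecting homotopies $H_n$ through finite sets, reparametrised onto the intervals $[2^{-(n+1)},2^{-n}]$, with the uniform closeness estimates yielding continuity at the slice $t=0$. The telescoping step and the continuity argument at $t=0$ are correct as you state them.

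The gap is exactly where you locate ``the only genuinely delicate point,'' and the mechanism you describe does not close it. In your construction of $\varphi_n$, a representative $p_i$ of a barely occupied cell (small positive $\mu_i(A)$) is to ``emerge from a representative of an overlapping cell along a short arc.'' But from \emph{which} overlapping cell? For the emerging point to lie near $A$, the donor cell must itself be occupied, and the set of occupied neighbours of $U_i$ changes discontinuously as $A$ varies: when some $\mu_j(A)$ passes through $0$, the natural donor switches from $U_j$ to another cell $U_{j'}$, and the emerging point jumps from near $p_j$ to near $p_{j'}$ --- a jump of the order of the mesh. Such discontinuities are fatal, since $H$ must be genuinely continuous at every $t>0$, not merely continuous up to an $O(2^{-n})$ error. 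Attempts to repair this by fading contributions in and out (parametrising by both $\mu_i$ and $\mu_j$, say) hit the same obstruction one level down: a point that fades out must, at the instant it disappears, coincide with or be arbitrarily close to a point that remains, and arranging this coherently over all pairs of overlapping cells is precisely the combinatorial content of the theorem. The same unresolved issue reappears in the connecting homotopies $H_n$, which you assert rather than construct: deforming one finite set into another of different cardinality \emph{through finite sets} requires the same merging and splitting discipline. (A smaller inaccuracy: in a convex metric, geodesics between nearby points cannot in general be selected continuously --- antipodal points on a circle already defeat this --- though this is harmless for you, since only finitely many fixed arcs between representatives are needed.) In short, your proposal correctly describes the kind of construction that is required, but the step that turns the description into a proof is missing; that missing step is the substance of the cited theorem, and is why the paper quotes it rather than proving it.
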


Recall that if $Y\subseteq X$ and $\varepsilon>0$ we say that $X$ is $\varepsilon$-deformable into $Y$ if there exists a continuous mapping $\varphi\colon X\times I\to X$ such that $\varphi(x,0)=x$, $\varphi(x,1)\in Y$ and the diameter of $\varphi(\{x\}\times I)$ is at most $\varepsilon$ for every $x\in X$.
The following proposition was proved in \cite[1.1 and 1.3]{Krasinkiewicz}.

\begin{proposition}\label{AR}
Let $X$ be a compact space such that for every $\varepsilon>0$ there exists an absolute retract $Y\subseteq X$ for which $X$ is $\varepsilon$-deformable into $Y$. Then $X$ is an absolute retract.
\end{proposition}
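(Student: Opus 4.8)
The plan is to reduce the statement to the classical facts that, after embedding the compactum $X$ into the Hilbert cube $Q$, it is an absolute neighborhood retract exactly when it is a retract of some neighborhood of itself in $Q$, and an absolute retract exactly when it is in addition contractible (equivalently, a retract of all of $Q$). So I fix an embedding $X\subseteq Q$ and first record what an $\varepsilon$-deformation provides. Given $\varepsilon>0$ together with an ANR (resp.\ AR) $Y\subseteq X$ and a deformation $\varphi\colon X\times[0,1]\to X$ of track-diameter at most $\varepsilon$ with $\varphi(\cdot,0)=\mathrm{id}_X$ and $\varphi(\cdot,1)(X)\subseteq Y$, the time-one map $f:=\varphi(\cdot,1)\colon X\to Y\subseteq X$ is $\varepsilon$-close to $\mathrm{id}_X$ and, crucially, is joined to $\mathrm{id}_X$ by the homotopy $\varphi$ all of whose tracks have diameter at most $\varepsilon$. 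Thus $X$ is dominated by the ANR $Y$ up to an \emph{arbitrarily small, explicitly controlled} homotopy, and it is this homotopy control (rather than mere closeness of $f$ to the identity) that must be exploited.

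Next I would manufacture approximate neighborhood retractions. Since $Y$ is an ANR it is a retract of one of its neighborhoods in $Q$, and hence has the extension property: any map into $Y$ from a closed subset of a metric space extends over a neighborhood. Applying this to $f\colon X\to Y$ (with $X$ compact, hence closed in $Q$) yields a neighborhood $U$ of $X$ in $Q$ and a map $F\colon U\to Y\subseteq X$ with $F\restriction_X=f$. In particular $F$ is a map of a neighborhood of $X$ into $X$ whose restriction to $X$ is $\varepsilon$-close to $\mathrm{id}_X$; by itself this only shows $X$ to be an approximative ANR, a property known to be strictly weaker than being an ANR, so the deformations must now be brought to bear.

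The heart of the argument, and the step I expect to be the main obstacle, is a limiting (telescoping) construction that upgrades these approximate retractions to a genuine one. I would choose $\varepsilon_n\to 0$ decreasing so fast that $\sum_n\varepsilon_n<\infty$, obtain for each $n$ an ANR $Y_n\subseteq X$, a deformation $\varphi_n$ of track-diameter at most $\varepsilon_n$ into $Y_n$, an extension $F_n\colon U_n\to X$ as above, and the controlled homotopy $\varphi_n$ joining $F_n\restriction_X$ to $\mathrm{id}_X$. Since each $\varphi_n$ maps into $X$ but $X$ is not yet known to be an ANR, I cannot extend $\varphi_n$ over a neighborhood directly into $X$; instead I would extend it into $Q$ (which is an AR, so the extension is automatic) and then use the approximate retractions $F_m$ to push the extended homotopy back into $X$ at the cost of a controlled error of size comparable to $\varepsilon_m$. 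Iterating this correction and using that the accumulated errors are summable, the successive maps converge uniformly on a fixed neighborhood of $X$ to a continuous map $r\colon U\to X$, and the construction is arranged so that the total correction restricted to $X$ is the identity. Thus $r$ is a genuine retraction of $U$ onto $X$, exhibiting $X$ as a neighborhood retract of $Q$, i.e.\ an ANR. The delicate points to be managed here are keeping the domains $U_n$ from collapsing onto $X$, ensuring uniform convergence of the telescope, and guaranteeing that the limit fixes $X$ pointwise; all three are controlled precisely by the summability of the homotopy track-diameters.

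Finally, the absolute retract case requires only a contractibility observation added to the ANR conclusion. If the approximating subspaces $Y$ may be taken to be absolute retracts, then each such $Y$ is contractible, so the time-one map $f\colon X\to Y\subseteq X$ is null-homotopic in $X$; combined with $\mathrm{id}_X\simeq f$ through $\varphi$, this gives $\mathrm{id}_X\simeq\mathrm{const}$, so $X$ is contractible. Since a contractible compact ANR is an AR, the ANR conclusion already established upgrades to the assertion that $X$ is an absolute retract.
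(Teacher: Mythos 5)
Your overall plan (embed $X$ in the Hilbert cube $Q$, use the extension property of the ANRs $Y$ to extend the time-one maps $f_n=\varphi_n(\cdot,1)$ to approximate retractions $F_n\colon U_n\to X$, then upgrade these to a genuine neighborhood retraction using the smallness of the homotopies) has the right shape, and both your preliminary reductions and the AR endgame ($Y$ an AR forces $\mathrm{id}_X$ null-homotopic, and a contractible compact ANR is an AR) are correct. But the step you yourself call the heart of the argument is where the proof actually lives, and the mechanism you propose there fails as described. First, there is no fixed neighborhood on which your telescope can converge: every available control --- the displacement bound $d(F_n(u),u)\le\varepsilon_n+\delta$, the closeness of successive corrected maps, the requirement that the extended homotopies land in $U_m$ so that $F_m$ can be applied --- holds only on neighborhoods of $X$ that shrink as $n,m\to\infty$, because off a small neighborhood of $X$ the extensions $F_n$ are arbitrary and completely unrelated to one another. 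Uniform convergence can therefore be extracted at best on $\bigcap_n V_n$, which may be nothing more than $X$ itself. Second, the indispensable condition $r\restriction_X=\mathrm{id}_X$ is asserted (``the construction is arranged so that\dots'') rather than arranged: every map produced by your scheme restricts on $X$ to a composition of maps $f_m$, each close to but never equal to the identity, and a uniform limit of such compositions need not be the identity. Summability of the $\varepsilon_n$ repairs neither defect; it is not the relevant mechanism.

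What a correct construction of this kind needs, and what is missing from your sketch, is a shell-by-shell gluing rather than a limit: choose closed neighborhoods $W_1\supseteq W_2\supseteq\cdots$ of $X$ with $\bigcap_n W_n=X$, use $F_n$ only on the shell $W_n\setminus\mathrm{int}\,W_{n+1}$, interpolate between $F_n$ and $F_{n+1}$ there by a homotopy with values in $X$ and with tracks tending to $0$, define $r=\mathrm{id}$ on $X$ outright, and prove continuity at points of $X$ from the displacement estimates. Producing those $X$-valued connecting homotopies is the genuinely delicate point: it uses the deformations $\varphi_n$ together with the fact that sufficiently close maps into the ANR $Y_n$ are joined by a small homotopy inside $Y_n$, and it forces $\varepsilon_{n+1}$ to be chosen inductively against the moduli of continuity of $f_n$, $F_n$ and the nearness-homotopy modulus of $Y_n$ --- not fixed in advance by a summability condition. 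For comparison: the paper offers no proof of this proposition at all; it quotes Krasinkiewicz (items 1.1 and 1.3), and the classical route there is Hanner's theorem that a metric compactum which is $\varepsilon$-homotopy-dominated by an ANR for every $\varepsilon>0$ is an ANR. Your own first paragraph already exhibits exactly such a domination ($f=\varphi(\cdot,1)$, $g$ the inclusion $Y\subseteq X$, with $\varphi$ as the small homotopy $gf\simeq\mathrm{id}_X$), so invoking Hanner's theorem would have finished your proof in one line; what is broken is only the attempt to reprove that theorem by a telescoping limit.
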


By a result of \cite{Anderson} the union of two  Hilbert cubes, whose intersection is a Z-set in each of the cubes and which is homeomorphic to the Hilbert cube, is the Hilbert cube again. By the result of \cite{Handel}, even a weaker condition is enough to get the same conclusion:

\begin{proposition}\label{HilbertUnion}
Let $X$ be a space which is the union of two Hilbert cubes $Q_1$ and $Q_2$.
Suppose that $Q_1\cap Q_2$ is a Hilbert cube which is a Z-set in $Q_1$. Then $X$ is a Hilbert cube.
\end{proposition}

It should be noted here that a space which is the union of two Hilbert cubes intersecting in a Hilbert cube may not be a Hilbert cube \cite{Sher}.

\begin{lemma}\label{HilbertInfiniteUnion}
Let $X$ be a compact metric space which is the union of Hilbert cubes $Q$, $Q_1$, $Q_2, \dots$ such that $Q_i\cap Q_j=\emptyset$ and $Q\cap Q_i$ is a Z-set in $Q_i$ for every $i, j\in\mathbb N$, $i\neq j$. Suppose moreover that the diameter of $Q_i$ tends to zero. Then $X$ is homeomorphic to the Hilbert cube as well.
\end{lemma}

\begin{proof}
Let us denote $X_i=Q\cup Q_1\cup \dots\cup Q_i$ and observe that it is homeomorphic to the Hilbert cube for every $i\in\mathbb N$ by an inductive usage of Proposition \ref{HilbertUnion}.
To make the same conclusion for $X$ we use the Toru\'nczyk's theorem.

There is topological just one way, how to embed the Hilbert cube into itself as a Z-set.
Hence every pair $(Q_i, Q_i\cap Q)$ is equivalent to $(Q\times I, Q\times\{0\})$ 
because $Q_i\cap Q$ is a Z-set in $Q_i$ and it is homeomorphic to the Hilbert cube. 
Thus simply there is a homotopy $h_i\colon Q_i\times I\to Q_i$ such that $h_i(x,t)=x$ for $t=0$ or $x\in Q_i\cap Q$ and $h_i(x,1)\in Q\cap Q_i$.
Let us denote 
\[
s_i(x, t)=
\begin{cases} 
 x, & x\in X_i, \\ h_j(x, t),  &x\in Q_j,\quad j>i.    
\end{cases}
\]
Since the diameter of $Q_i$ tends to zero, the diameters of $s_i(\{x\}\times I)$ are sufficiently small for large $i$. Hence for every $\varepsilon>0$ it follows that $X$ is $\varepsilon$-deformable into $X_i$ for some $i\in\mathbb N$.
Moreover, for every $i\in\mathbb N$, $X_i$ is an absolute retract.
Hence $X$ is an absolute retract by Proposition \ref{AR}. 

Let us argue that $X$ has the disjoint cell property (see \cite[p. 294]{vanMill}).
Denote $r_i(x)=s_i(x, 1)$. Then $r_i\colon X\to X_i$ is a retraction. 
Suppose that $f, g\colon  I^n\to X$ are continuous mappings and $\varepsilon>0$. Then for sufficiently large $i\in\mathbb N$ diameters of $Q_j$ are smaller than $\varepsilon$ for $j\geq i$. Hence $r_i$ is $\varepsilon$-close to identity on $X$. Since $X_i$ is homeomorphic to the Hilbert cube it has the disjoint cell property and thus there are continuous mappings $f', g'\colon  I^n\to X_i$ with disjoint images such that $r_i\circ f$ and $f'$ as well as $r_i\circ g$ and $g'$ are $\varepsilon$-close. 
It follows that by the triangle inequality that $f$ and $f'$ as well as $g$ and $g'$ are $2\varepsilon$-close.
Thus $X$ has the disjoint cell property.
As mentioned at the beginning of the proof, by Toru\'nczyk's theorem it follows that $X$ is homeomorphic to the Hilbert cube.
\end{proof}

An equivalence relation $E$ on a Borel subset $Y$ of a Polish space $X$ is said to be \emph{countably separated} if there is a sequence $(Z_n)_{n=1}^{\infty}$ of $E$-invariant Borel subsets of $Y$ such that for all $x,y \in Y$, the points $x$ and $y$ are $E$-equivalent if and only if the sets $\{ n \in \N \, ; \ x \in Z_n \}$ and $\{ n \in \N \, ; \ y \in Z_n \}$ are equal.
A \emph{transversal} for an equivalence relation $E\subseteq X\times X$ is a set $T\subseteq X$ whose intersection with every $E$-equivalence class is a one point set.

The following proposition by Burgess is a special kind of a selection theorem and it will serve as a useful tool to complete the Borel coding argument, which is by no means straightforward.

\begin{proposition}[\cite{Burgess}] \label{Burgess}
Let $G$ be a Polish group, $X$ a Polish space and let $\alpha$ be a continuous action of $G$ on $X$. Denote by $E$ the orbit equivalence relation induced by $\alpha$ and let $Y$ be an $E$-invariant Borel subset of $X$. Let $E_Y$ be the restriction of $E$ to $Y$ and assume that $E_Y$ is countably separated. Then there is a Borel transversal for $E_Y$.
\end{proposition}

In the next proposition, we denote by $\mathcal K_X(Q)$ the collection of subspaces of $Q$ which are homeomorphic to $X$. It is known for a long time that this is always a Borel set \cite{Ryll-Nardzewski}. 

\begin{proposition}\label{HilbertSelection}
There is a Borel mapping $\gamma\colon  \mathcal K_Q(Q) \to \Inj(Q, Q)$ such that the image of $\gamma(R)$ equals to $R$.
\end{proposition}

\begin{proof}
Let $G$ be the homeomorphism group of $Q$ and let us consider the action $\alpha$ of G on $\Inj(Q, Q)$ given by $g\cdot h= h\circ g$. It follows that the corresponding orbit equivalence relation $E$ induced by $\alpha$ satisfies that $f E g$ if and only if the images of $f$ and $g$ are equal. Moreover $E$ is countably separated as if we consider a countable base $\mathcal B$ of $Q$ and $Z_B=\{f\in \Inj(Q, Q)\colon  Im(f)\cap B\neq\emptyset\}$ for $B\in\mathcal B$ then $f E g$ if and only if $\{B\in\mathcal B\colon   f\in Z_B\}=\{B\in\mathcal B\colon  g\in Z_B\}$ and also the sets $Z_B$ are clearly invariant with respect to $E$.
By a straightforward application of Proposition \ref{Burgess} we get that there is a Borel transversal $T$ of $E$. As the mapping $\chi\colon  f\mapsto Im(f)$, $\Inj(Q, Q)\to \mathcal K(Q)$ is Borel (even continuous), the graph of $\chi$ is a Borel subset of $\Inj(Q, Q)\times \mathcal K(Q)$. As moreover $T$ is a Borel subset of the domain of $\chi$ and $\chi$ is one-to-one on $T$ it follows that the mapping $\gamma=(\chi|T)^{-1}$ has a Borel graph and thus it is a Borel mapping. Clearly for every $R\in\mathcal K_Q(Q)$ we get that $\gamma(R)$ is an embedding of $Q$ into $Q$ whose image equals $R$.
\end{proof}

Some ideas for the proof of the following comes from the paper \cite{KrupskiVejnar}.

\begin{theorem}\label{Hilbert} 
The conjugacy of Hilbert cube homeomorphisms (or selfmaps) is Borel bireducible to $E_{G_\infty}$.
\end{theorem}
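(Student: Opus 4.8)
# Proof Proposal for Theorem~\ref{Hilbert}

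\textbf{The plan is to} establish the two directions of the Borel bireducibility separately. Since $E_{G_\infty}$ is by definition universal for all orbit equivalence relations induced by continuous actions of Polish groups, and since conjugacy of Hilbert cube homeomorphisms (resp.\ selfmaps) is itself such an orbit equivalence relation---the homeomorphism group $\mathcal H(Q)$ acts continuously on the Polish space $\mathcal H(Q)$ (resp.\ on $C(Q)$) by conjugation---the reduction of conjugacy \emph{into} $E_{G_\infty}$ is immediate. Thus the entire content lies in the reverse reduction: embedding $E_{G_\infty}$ \emph{into} conjugacy of Hilbert cube dynamical systems. The natural strategy is to factor through Zieli\'nski's theorem that the homeomorphism equivalence relation of metrizable compacta is Borel bireducible to $E_{G_\infty}$, so it suffices to Borel-reduce \emph{homeomorphism of compacta} to \emph{conjugacy of Hilbert cube homeomorphisms}.

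\textbf{First I would} set up the coding map. Given a compact metrizable space coded as an element $K \in \mathcal K(Q)$ (using that every metrizable compactum embeds in $Q$), I want to build a Hilbert cube homeomorphism (or selfmap) $\Phi(K)$ whose conjugacy class remembers exactly the homeomorphism type of $K$. The idea borrowed from \cite{KrupskiVejnar} is to encode $K$ as the set of \emph{locally attracting fixed points} of a dynamical system on $Q$: these form a topological invariant of the conjugacy class, since any conjugacy must carry locally attracting fixed points of $f$ onto those of $g$ preserving the dynamics. Concretely, I would work inside $Q \times I$ (itself a Hilbert cube), place a copy of $K$ as a $Z$-set in $Q \times \{0\}$, and define a map that fixes $K$ pointwise while contracting a neighborhood of each point of $K$ toward it along the $I$-direction, so that precisely the points of $K$ become locally attracting fixed points and $\Fix$-structure recovers $K$. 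The $Z$-set machinery recalled before the theorem is what makes this geometrically possible: every homeomorphism between the embedded copies of $K$ and $L$ extends to a homeomorphism of the ambient Hilbert cube (via the $Z$-set extension property, and sharpened by Proposition~\ref{extension}), which is what converts a homeomorphism $K \cong L$ into a conjugacy $\Phi(K) \sim \Phi(L)$.

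\textbf{The two verifications} are then: (i) if $K \cong L$ as spaces, then $\Phi(K)$ and $\Phi(L)$ are conjugate---here I extend the homeomorphism of the $Z$-sets to an ambient homeomorphism respecting the contracting dynamics; and (ii) if $\Phi(K)$ and $\Phi(L)$ are conjugate, then $K \cong L$---here I argue that the conjugacy restricts to a homeomorphism between the respective sets of locally attracting fixed points, which are canonical copies of $K$ and $L$. The auxiliary results Proposition~\ref{homotopy} and Proposition~\ref{AR} are presumably deployed to guarantee that the constructed space $\Phi(K)$ genuinely is (a homeomorph of) the Hilbert cube: one would use the deformability criterion of Proposition~\ref{AR} to show the relevant ambient space is an absolute retract, and combine this with the characterization of the Hilbert cube to conclude $\Phi(K) \in \mathcal H(Q)$ (resp.\ $C(Q)$), uniformly and Borel-measurably in $K$.

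\textbf{The main obstacle} I expect is making the construction $K \mapsto \Phi(K)$ simultaneously \emph{Borel} in $K$ and \emph{canonical enough} that the fixed-point set is recovered on the nose. Encoding $K$ via a $Z$-set and a contraction is easy pointwise, but producing a single Borel-measurable assignment $\mathcal K(Q) \to \mathcal H(Q)$ requires choosing the embeddings, neighborhoods, and contraction rates in a uniformly Borel fashion---this is exactly where Proposition~\ref{homotopy} (giving a canonical homotopy deforming compacta into finite sets, parametrized continuously over $\mathcal K(X)$) should be used to smooth out the choices. The second delicate point is verifying that \emph{no spurious} locally attracting fixed points are created outside $K$, so that the invariant extracted from the conjugacy class is precisely $K$ and not something larger; controlling the global dynamics away from the $Z$-set copy of $K$, while keeping the total space homeomorphic to $Q$, is the technical heart of the argument.
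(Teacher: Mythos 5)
Your reduction of conjugacy \emph{into} $E_{G_\infty}$ is fine, and in fact more direct than the paper's, which factors through isomorphism of structures $(Q,R)$ with $R$ a closed binary relation via $f\mapsto (Q,\mathrm{graph}(f))$ and cites \cite{RosendalZielinski}. The genuine gap is in the hard direction, where your central device cannot work. You propose to arrange that ``precisely the points of $K$ become locally attracting fixed points.'' With the definition used in the paper, if $x$ is a locally attracting fixed point with attracting neighborhood $U$, then every fixed point $z\in U$ satisfies $f^n(z)=z\to x$, forcing $z=x$; hence every locally attracting fixed point is isolated in $\Fix(f)$, and the set of all such points is always a discrete subspace. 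Since your map fixes $K$ pointwise, no point of a non-discrete $K$ (a Cantor set, an arc, \dots) can be locally attracting: each of its neighborhoods contains other fixed points. But the non-discrete compacta are exactly the ones that carry the complexity of Zieli\'nski's relation, so the proposed invariant degenerates precisely where it is needed, and the reduction fails at its core.

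The paper circumvents this with a two-level encoding, and this is where Propositions~\ref{homotopy} and~\ref{extension} really enter (not, as you suggest, merely to make the choices Borel). Using the homotopy $H$ of Proposition~\ref{homotopy} one passes to \emph{finite} approximations $D_n^K=H(K,2^{-n})$ of $K$; over each point of $D_n^K$, at height $2^{-n}$, one attaches a Hilbert-cube cone and defines the dynamics to push each cone to its vertex while fixing everything else. The locally attracting fixed points are then exactly the cone vertices --- a discrete set, as they must be --- and $K$ is recovered dynamically as the set of fixed points that are \emph{limits} of locally attracting fixed points without being locally attracting themselves; the vertices accumulate exactly on the copy $K\times\{0\}\times\{0\}$ because $D_n^K\to K$ in the Hausdorff metric. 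Proposition~\ref{extension} is then applied to the pair $K\subseteq K\cup\bigcup_n D_n^K\times\{2^{-n}\}$, whose complement is a dense set of isolated points (this is exactly why that proposition has its peculiar hypothesis), and the resulting homeomorphism is extended by $Z$-set machinery and conewise to a conjugacy. A further point your sketch leaves open: the space obtained by gluing the cones must itself be proved homeomorphic to the Hilbert cube, which in the paper requires Anderson's theorem on unions of Hilbert cubes, the deformation criterion of Proposition~\ref{AR}, and Toru\'nczyk's characterization --- not just an appeal to deformability.
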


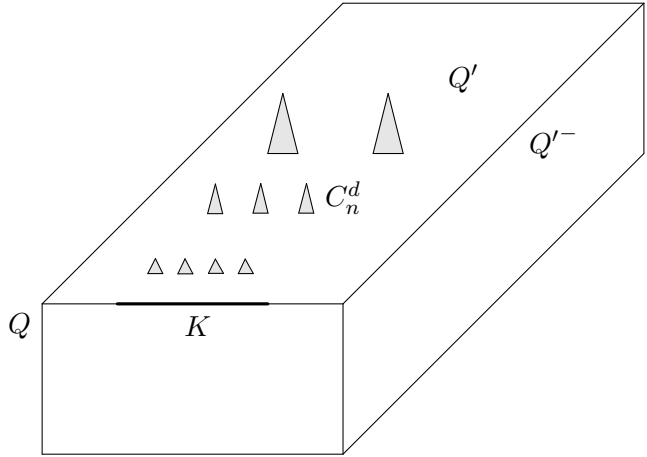
\begin{figure}
\begin{tikzpicture}[line cap=round,line join=round,>=triangle 45,x=2.0cm,y=2.0cm]
\clip(-0.8870230685673408,-1.0495897586754248) rectangle (6.689259282742075,2.8063219870557705);
\fill[line width=0.4pt,fill=black,fill opacity=0.10000000149011612] (1.5,1.) -- (1.7,1.) -- (1.6,1.4) -- cycle;
\fill[line width=0.4pt,fill=black,fill opacity=0.10000000149011612] (2.2,1.) -- (2.4,1.) -- (2.3,1.4) -- cycle;
\fill[line width=0.4pt,fill=black,fill opacity=0.1] (1.1,0.6) -- (1.2,0.6) -- (1.1516705507080174,0.7973955765263389) -- cycle;
\fill[line width=0.4pt,fill=black,fill opacity=0.1] (1.4015873351160641,0.6) -- (1.501587335116064,0.6034837273061294) -- (1.4532578858240814,0.8008793038324683) -- cycle;
\fill[line width=0.4pt,fill=black,fill opacity=0.1] (1.7055408494552977,0.6034837273061294) -- (1.8055408494552976,0.6034837273061294) -- (1.757211400163315,0.8008793038324683) -- cycle;
\fill[line width=0.4pt,fill=black,fill opacity=0.1] (0.702161987805911,0.20130445208116723) -- (0.8021619878059107,0.20130445208116723) -- (0.752959293246011,0.30131164716636394) -- cycle;
\fill[line width=0.4pt,fill=black,fill opacity=0.1] (0.9021763779763046,0.19971703628616413) -- (1.0021763779763042,0.19971703628616413) -- (0.9497988518263981,0.2997242313713608) -- cycle;
\fill[line width=0.4pt,fill=black,fill opacity=0.1] (1.1053655997367042,0.20289186787617036) -- (1.205365599736704,0.20289186787617036) -- (1.1514006577917946,0.2981368155763577) -- cycle;
\fill[line width=0.4pt,fill=black,fill opacity=0.1] (1.3037925741120944,0.20130445208116723) -- (1.4037925741120942,0.20130445208116723) -- (1.349827632167185,0.2981368155763577) -- cycle;
\draw [line width=0.4pt] (0.,0.)-- (2.,0.);
\draw [line width=0.4pt] (0.,-1.)-- (2.,-1.);
\draw [line width=0.4pt] (0.,0.)-- (0.,-1.);
\draw [line width=0.4pt] (2.,0.)-- (2.,-1.);
\draw [line width=0.4pt] (2.,2.)-- (4.,2.);
\draw [line width=0.4pt] (2.,2.)-- (0.,0.);
\draw [line width=0.4pt] (4.,2.)-- (2.,0.);
\draw [line width=0.4pt] (4.,2.)-- (4.,1.);
\draw [line width=0.4pt] (4.,1.)-- (2.,-1);
\draw [line width=1.2pt] (0.5,0.)-- (1.5,0.);
\draw [line width=0.4pt] (1.5,1.)-- (1.7,1.);
\draw [line width=0.4pt] (1.7,1.)-- (1.6,1.4);
\draw [line width=0.4pt] (1.6,1.4)-- (1.5,1.);
\draw [line width=0.4pt] (2.2,1.)-- (2.4,1.);
\draw [line width=0.4pt] (2.4,1.)-- (2.3,1.4);
\draw [line width=0.4pt] (2.3,1.4)-- (2.2,1.);
\draw [line width=0.4pt] (1.1,0.6)-- (1.2,0.6);
\draw [line width=0.4pt] (1.2,0.6)-- (1.1516705507080174,0.7973955765263389);
\draw [line width=0.4pt] (1.1516705507080174,0.7973955765263389)-- (1.1,0.6);
\draw [line width=0.4pt] (1.4015873351160641,0.6034837273061294)-- (1.501587335116064,0.6034837273061294);
\draw [line width=0.4pt] (1.501587335116064,0.6034837273061294)-- (1.4532578858240814,0.8008793038324683);
\draw [line width=0.4pt] (1.4532578858240814,0.8008793038324683)-- (1.4015873351160641,0.6034837273061294);
\draw [line width=0.4pt] (1.7055408494552977,0.6034837273061294)-- (1.8055408494552976,0.6034837273061294);
\draw [line width=0.4pt] (1.8055408494552976,0.6034837273061294)-- (1.757211400163315,0.8008793038324683);
\draw [line width=0.4pt] (1.757211400163315,0.8008793038324683)-- (1.7055408494552977,0.6034837273061294);
\draw [line width=0.4pt] (0.702161987805911,0.20130445208116723)-- (0.8021619878059107,0.20130445208116723);
\draw [line width=0.4pt] (0.8021619878059107,0.20130445208116723)-- (0.752959293246011,0.30131164716636394);
\draw [line width=0.4pt] (0.752959293246011,0.30131164716636394)-- (0.702161987805911,0.20130445208116723);
\draw [line width=0.4pt] (0.9021763779763046,0.19971703628616413)-- (1.0021763779763042,0.19971703628616413);
\draw [line width=0.4pt] (1.0021763779763042,0.19971703628616413)-- (0.9497988518263981,0.2997242313713608);
\draw [line width=0.4pt] (0.9497988518263981,0.2997242313713608)-- (0.9021763779763046,0.19971703628616413);
\draw [line width=0.4pt] (1.1053655997367042,0.20289186787617036)-- (1.205365599736704,0.20289186787617036);
\draw [line width=0.4pt] (1.205365599736704,0.20289186787617036)-- (1.1514006577917946,0.2981368155763577);
\draw [line width=0.4pt] (1.1514006577917946,0.2981368155763577)-- (1.1053655997367042,0.20289186787617036);
\draw [line width=0.4pt] (1.3037925741120944,0.20130445208116723)-- (1.4037925741120942,0.20130445208116723);
\draw [line width=0.4pt] (1.4037925741120942,0.20130445208116723)-- (1.349827632167185,0.2981368155763577);
\draw [line width=0.4pt] (1.349827632167185,0.2981368155763577)-- (1.3037925741120944,0.20130445208116723);
\draw[color=black] (-0.15,-0.15) node {$Q$};
\draw[color=black] (2.8,1.5) node {$Q{'}$};
\draw[color=black] (3.4,1.0750956992997227) node {$Q{'}^-$};
\draw[color=black] (1.0339220556696547,-0.15) node {$K$};
\draw[color=black] (2,0.7077712308941867) node {$C^d_n$};
\end{tikzpicture}
\caption{The compactum $Q_K$}
\end{figure}

\begin{proof}
For one direction it is enough to prove that the homeomorphism equivalence relation of metrizable compacta is Borel reducible to conjugacy of Hilbert cube homeomorphisms because the first relation is Borel bireducible to $E_{G_\infty}$ by the main result of \cite{Zielinski}.
To this end let 
\begin{align*}
Q&=\{x\in\ell_2\colon 0\leq x_n\leq 1/n\}, \\
Q'&=Q\times I\times \{0\}, \\
Q''&=Q\times I\times [-1,1],\\
Q'^-&=Q\times I\times [-1,0]
\end{align*}
and let $\|\cdot\|$ be the usual norm on $\ell_2$.

Let us fix a homotopy $H\colon \mathcal K(Q)\times I\to \mathcal K(Q)$ given by Proposition~\ref{homotopy} for the case $X=Q$.
Let us fix $K\in \mathcal K(Q)$. We want to find a homeomorphism $f_K$ of a Hilbert cube $Q_K\subseteq Q''$ such that the topological information about $K$ is somehow encoded in the dynamics of $f_K$. Let $D_n^K=H(K, 2^{-n})$, $n\in\mathbb N$. 
Let $\varepsilon_n$ be the minimum of $1/n$ and the smallest distance of different points in $D_n^K$. For every $d\in D_n^K$ 
fix a set 
\[
B^d_n=\{(x, 2^{-n},0)\in Q''\colon \|d- x\|\leq\varepsilon_n/3\}.
\]
It follows that $B^d_n$ is always homeomorphic to the Hilbert cube since it is affinely homeomorphic to an infinite dimensional compact convex subset of a Hilbert space \cite{Keller}.
Let $C^d_n$ be the cone in $Q''$ with base $B^d_n$ and with the vertex $(d, 2^{-n}, 2^{-n})$, $d\in D_n^K$, $n\in\mathbb N$, 
i.e., the union of all segments with end points $(d, 2^{-n}, 2^{-n})$ and $p$, $p\in B^d_n$.
The cone over the Hilbert cube is homeomorphic to the Hilbert cube \cite[Theorem 1.7.5]{vanMill}, which applies to $C^d_n$.
Let $Q_K^m=Q'^-\cup\bigcup\{ C^d_n\colon n\in\mathbb N, n\leq m, d\in D_n^K\}$ and $Q_K=\bigcup\{Q_K^m\colon m\in\mathbb N\}$ (see Figure 1).

Since $Q'^-\cap C^d_n=B^d_n$ is homeomorphic to the Hilbert cube, which is a Z-set in $C^d_n$, we inductively obtain by Lemma \ref{HilbertInfiniteUnion} that $Q_K$ is a Hilbert cube.

Let $h(x)=\sqrt{x}, x\in I$ or any fixed homeomorphism of $I$ with two fixed points $0,1$; and $1$ 
being a locally attracting fixed point. We define
\[f_K(x)=\begin{cases}x, &x\in Q'^-,
\\ 
((1-h(t))a+h(t)d, 2^{-n}, 2^{-n} h(t)), &
\begin{matrix}
x=((1-t)a+td, 2^{-n}, 2^{-n}t)\in C^d_n,\\
d\in D_n^K,t\in I, n\in\N.
\end{matrix}
\end{cases}\]


All the points in $Q'^-$ are fixed points for $f_K$ and these are clearly not attracting.
All the points in $\bigcup D_n^K$ are fixed points of $f_K$ and these are attracting. There are no other fixed points of $f_K$.
It follows that $K$ is homeomorphic (or even equal) to the set of fixed points that are limits of attracting points but not attracting by itself (and thus defined only by dynamical notions). Hence if $f_K$ and $f_L$ are conjugate then $K$ and $L$ are homeomorphic, $K, L\in \mathcal K(Q)$.

On the other hand if $K, L$ are homeomorphic compacta in $Q$ then the sets $K\cup \bigcup_{n\in\mathbb N}D_n^K\times \{2^{-n}\}$ and $L\cup\bigcup_{n\in\mathbb N}D_n^L\times \{2^{-n}\}$ are homeomorphic by Lemma~\ref{extension}. This homeomorphism can be simply extended to a homeomorphism
\[\varphi\colon (K\times \{(0,0)\})\cup \bigcup\{B^d_n(K)\colon d\in D_n^K, n\in\mathbb N\} \to (L\times\{(0, 0)\})\cup \bigcup \{B^d_n(L)\colon d\in D_n^L, n\in\mathbb N\}.\]
Both the sets in the domain and range of $\varphi$ are $Z$-sets in $Q'^-$ since these are closed subsets of the $Z$-set $Q'\times\{0\}$ \cite[Lemma 5.1.2, Lemma 5.1.3]{vanMill}. Hence $\varphi$ can be extended to a homeomorphism $\varphi'\colon Q'^-\to Q'^-$ \cite[Theorem 5.3.7]{vanMill}.
It remains to extend $\varphi'$ 
linearly on the cones to obtain a homeomorphism $\varphi''$.
It follows that $\varphi''$ conjugates $f_K$ and $f_L$.
Note that we can identify $f_K$ with its graph and thus it can be considered as a closed subspace of $Q''\times Q''$.
To verify that the mapping $\chi\colon  \mathcal K(Q)\to \mathcal K(Q''\times Q'')$, $K\mapsto f_K$ is Borel is a routine which is usually omitted in this type of proofs.


However, we are still not done, since $f_K$ is defined on the topological copy of the Hilbert cube $Q_K$ which differs when changing $K$. 
Let us consider the Borel mapping $\gamma$ given by Proposition \ref{HilbertSelection}.
We redefine the mapping $f_K$ by conjugating it via $\gamma(Q_K)$ in the following way.
The mapping $K\mapsto \gamma(Q_K)^{-1} \circ f_K\circ (\gamma(Q_K))$, $\mathcal K(Q)\to \mathcal H(Q)$ is the desired Borel reduction.


To conclude the proof it is enough to Borel reduce conjugacy of Hilbert cube maps to $E_{G_\infty}$. Consider structures of the form $(Q, R)$ where $R$ is a closed binary relation on $Q$. Two such structures $(Q, R)$ and $(Q, S)$ are said to be isomorphic if there is a homeomorphisms $\psi\colon Q\to Q$ for which $(\psi\times\psi)(R)=S$.
By a fairly more general result \cite{RosendalZielinski} it follows that such isomorphism equivalence relation is Borel reducible to $E_{G_\infty}$.
There is a Borel (even continuous) reduction which takes a continuous map $f\colon Q\to Q$ and assigns $(Q, \text{graph}(f))$ to it. Combining the two reductions we get the desired one.
\end{proof}




\section{Concluding remarks and questions}

Let us summarize some of the results on the complexity of conjugacy equivalence relation in Table~1 in which we consider conjugacy equivalence relation of 
maps, homeomorphisms, and pointed transitive homeomorphisms of the 
interval,  circle,  Cantor set and Hilbert cube, respectively.
Let us recall that a \emph{pointed dynamical system} is a triple $(X, f, x)$, where $(X, f)$ is a dynamical system and $x\in X$. We say, that a pointed dynamical system $(X, f, x)$ is \emph{transitive} if the forward orbit of $x$ in $(X, f)$ is dense. 
Two pointed dynamical systems $(X, f, x)$ and $(Y, g, y)$ are called \emph{conjugate} if there is a conjugacy of $(X,f)$ and $(Y, g)$ mapping $x$ to $y$.
We proceed by a series of simple notes as comments on Table~\ref{table}.

\begin{table}[h]
\centerline{
\begin{tabular}{ r  l  l }
\hline
& Homeomorphisms/maps & Pointed transitive homeomorphisms 
\\
\hline
Interval  & $E_{S_\infty}$ \cite{Hjorth}, Theorem \ref{bireducible2}  & $\emptyset$ Note \ref{arc}     \\
Circle   & $E_{S_\infty}$ Note \ref{circle} & $E_=$ Note \ref{circlepointed}  \\
Cantor set  & $E_{S_\infty}$ \cite{CamerloGao} & $E_{=^+}$ \cite{Kaya}, Note \ref{Cantor}  \\
Hilbert cube  & $E_{G_\infty}$ Theorem \ref{Hilbert} & ? Question \ref{question} \\
\hline
\end{tabular}
}
\caption{The complexity of conjugacy equivalence relation.}
\label{table}
\end{table} 

\begin{note}
Conjugacy of pointed transitive maps of the interval is smooth; indeed it is enough to assign to every pointed transitive dynamical system $(I, f, x)$ the $\N\times\N$ matrix of true and false: $(f^m(x)<f^n(x))_{m, n\in\mathbb N}$ which determines $f$ uniquely up to increasing conjugacy.
\end{note}

\begin{note}\label{arc}
There are no transitive homeomorphisms on the interval.
\end{note}

\begin{note}\label{circle}
The complexity result by Hjorth \cite[Section 4.2]{Hjorth} that conjugacy of interval homeomorphisms is Borel bireducible to $E_{S_\infty}$, remains true for circle homeomorphisms simply by a modification of the original proof. A modification of the proof of Theorem \ref{bireducible2} will give a similar result for circle maps. The same method as for the interval case can be used just by considering left or right local maxima and minima defined in an obvious way and then iterating this set forward and backward in a similar manner as in Notation \ref{conditions}.
Thus conjugacy of circle selfmaps is Borel bireducible to the $S_\infty$-universal orbit equivalence relation.
\end{note}

\begin{note}\label{circlepointed}
Transitive homeomorphisms of the circle are well known to be conjugate to irrational rotations. Hence the rotation number is a complete invariant and hence conjugacy of (pointed) transitive homeomorphisms of the circle is Borel bireducible to the equality on irrationals (or on an uncountable Polish space).
\end{note}

\begin{note}\label{Cantor}

By a result of Kaya \cite{Kaya}, conjugacy of pointed minimal homeomorphisms of the Cantor set is Borel bireducible to the equality of countable sets $E_{=^+}$. Note that his proof works in the same vein for pointed transitive homeomorphisms of the Cantor set.
Let us recall the main part of his construction in this case. Let $X$ be the Cantor set and $\mathcal B$ the collection of all clopen sets in $X$.  To a pointed transitive system $(X, f, x)$ we assign the collection
\[\mathsf{Ret}(f, x)=\{\mathsf{Ret_B}(f, x)\colon B\in\mathcal B\},\]
where $\mathsf{Ret_B}(f, x)=\{n\in\Z\colon f^n(x)\in B\}$.
It can be verified that the mapping $\Phi$ defined as
\[\Phi(f, x)=(\mathsf{Ret_B}(f, x)\colon B\in\mathcal B)\in \mathcal P(\Z)^{\mathcal B}\]
is a reduction of pointed transitive Cantor maps to the equality of countable sets in $\mathcal P(\Z)^{\mathcal B}$, 
i.e., $(f,x)$ is conjugate to $(g, y)$ if and only if $\mathsf{Ret}(f,x)=\mathsf{Ret}(g,y)$.
\end{note}

The following question is the missing part to complete Table~\ref{table}.

\begin{question}\label{question}
What is the complexity of conjugacy of transitive pointed Hilbert cube homeomorphisms (or maps)?
\end{question}

It was explained to us by Burak Kaya, that conjugacy equivalence relation of pointed transitive Hilbert cube homeomorphisms is a Borel relation \cite{KayaAmbit}.
The main reason is that every conjugacy of such systems preserves the distinguished point and thus it is automatically prescribed on a dense subset. Hence there is at most one conjugacy between such systems. Let us note that neither $E_{S_\infty}$ nor $E_{G_\infty}$ is Borel and thus these equivalence relations can not answer Question \ref{question}.


Since triangular maps i.e., maps $f\colon I^2\to I^2$ of the form $f(x,y)=(g(x), h(x,y))$ for continuous 
maps $g\colon I\to I$ and $h\colon I^2\to I$, 
lie in between one-dimensional and two dimensional and there is a gap in the complexity of the last 
two mentioned equivalence relations, the following question is natural.

\begin{question}
What is the complexity of conjugacy of triangular maps?
Is it Borel bireducible to $E_{S_\infty}$ or to $E_{G_\infty}$?
\end{question}

Positive answer to the next question would provide a strengthening of Theorem~\ref{bireducible2}.

\begin{question}
Is conjugacy of closed binary relations on the closed interval Borel reducible to the $S_\infty$-universal orbit equivalence relation?
\end{question}

The answer to the preceding question is affirmative if Hjorth's conjecture is true.

\bibliographystyle{alpha}
\bibliography{citace}
\end{document}